\newtheorem{theorem}{Theorem}[section]
\newtheorem{lemma}[theorem]{Lemma}
\newtheorem{corollary}[theorem]{Corollary}
\theoremstyle{definition}
\newtheorem{definition}[theorem]{Definition}
\newtheorem{remark}[theorem]{Remark}
\newtheorem{example}[theorem]{Example}
\newtheorem{convention}[theorem]{Convention}
\theoremstyle{remark}
\newcommand{\p}{\varphi}
\newcommand{\RR}{{\rm I\kern -1.6pt{\rm R}}}
\def\ds{\displaystyle}
\def\deq{:=}
\def\R{\mathbb{R}}
\def\S{\mathbb{S}}
\def\id{\mathbbm{1}}
\def\H{\mathbb{H}}
\def\closure{\overline}
\def\span{\text{span}}
\newcommand{\D}{\mathbb{D}}
\newcommand{\der}[2]{D_{#2}{#1}} 
\newcommand{\de}[1]{D{#1}}
\newcommand{\basis}[1]{\frac{\partial}{\partial #1}}
\DeclareMathOperator{\Ima}{Im}
\def\cc{Carnot-Carath\'{e}odory~}
\newcommand{\dcc}[1]{d_{CC}^{#1}}
\def\H{\mathbb{H}}
\newcommand{\hm}[1]{\mathcal{H}^{#1}}
\newcommand{\pilip}[1]{\pi_{#1}^{\text{Lip}}}
\newcommand{\bcc}[1]{B_{CC}^{#1}}
\newcommand{\length}[1]{l^{#1}}
\def\lipmap{{\sf Map}^{\sf Lip}}
\def\RR{\mathbb R}
\newcommand\blfootnote[1]{%
  \begingroup
  \renewcommand\thefootnote{}\footnote{#1}%
  \addtocounter{footnote}{-1}%
  \endgroup
}
\begin{document}

\title{Lipschitz Homotopy Groups of Contact 3-Manifolds}
\author{Daniel Perry}
{\let\newpage\relax\maketitle}
  \begin{abstract}
We study contact 3-manifolds using the techniques of sub-Riemannian geometry and geometric measure theory, in particular establishing properties of their Lipschitz homotopy groups. We prove a biLipschitz version of the Theorem of Darboux: a contact $(2n+1)$-manifold endowed with a sub-Riemannian structure is locally biLipschitz equivalent to the Heisenberg group $\H^n$ with its \cc metric. Then each contact $(2n+1)$-manifold endowed with a sub-Riemannian structure is purely $k$-unrectifiable for $k>n$. We then extend results of Dejarnette et al. \cite{Dej} and Wenger and Young \cite{Weg} on the Lipschitz homotopy groups of $\H^1$ to an arbitrary contact 3-manifold endowed with a \cc metric, namely that for any contact 3-manifold the first Lipschitz homotopy group is uncountably generated and all higher Lipschitz homotopy groups are trivial. Therefore, in the sense of Lipschitz homotopy groups, a contact 3-manifold is a $K(\pi,1)$-space with an uncountably generated first homotopy group. Along the way, we prove that each open distributional embedding between purely 2-unrectifiable sub-Riemannian manifolds induces an injective map on the associated first Lipschitz homotopy groups. Therefore, each open subset of a contact 3-manifold determines an uncountable subgroup of the first Lipschitz homotopy group of the contact 3-manifold. \blfootnote{{\it Key words and phrases.} Heisenberg group, contact manifolds, unrectifiability, Lipschitz homotopy groups, geometric measure theory, sub-Riemannian manifold. \\ {\bf Mathematical Reviews subject classification.} Primary: 53C17, 57K33; Secondary: 28A75, 55Q70, 53D10 \\ {\bf Acknowledgments.} The author was supported by NSF awards 1507704 and 1812055. }
  \end{abstract}

%\tableofcontents

\normalem

% Main sections:  
\section{Introduction.}

In this paper, we use metric geometry to show a sense in which each connected contact 3-manifold is a $K(\pi,1)$-space with an uncountably generated first homotopy group.  After we endow a contact 3-manifold with a metric structure sensitive to the distribution, we probe the space with Lipschitz maps. The metric is the \cc metric of sub-Riemannian geometry. Our results are phrased in terms of Lipschitz homotopy groups. 

%The calculations indicate that contact 3-manifolds are examples of $K(\pi,1)$-spaces for an uncountably generated group $\pi$.

In contact topology, the classification of contact 3-manifolds is of active interest. The primary tools used for better understanding contact 3-manifolds have been Reeb vector fields and singular foliations. For an introduction to contact geometry, see\cite{Gei}. For a thorough overview of techniques and results in contact topology, see \cite{Gei2}. For results classifying contact 3-manifolds, see \cite{Eliashberg} or \cite{Massot}.

In this paper, we instead apply the techniques of sub-Riemannian geometry to study contact 3-manifolds. Though contact 3-manifolds do not have an inherent sub-Riemannian structure, they can be endowed with such structure. Once the sub-Riemannian structure is fixed, the underlying manifold inherits a \cc metric structure which is sensitive to the contact distribution. For an overview of sub-Riemannian geometry, see \cite{Mon}.

Our primary tool for studying the metric structure of a contact manifold (of any dimension) is Lipschitz homotopy groups. Dejarnette et al. \cite{Dej}, first introduced Lipschitz homotopy groups in order to study Sobolev mappings into the sub-Riemannian manifold $\H^1$. Since Lipschitz homotopy groups were introduced, they have been calculated for various Heisenberg groups in \cite{Dej}, \cite{Haj}, \cite{HajSch}, \cite{HajTys}, and \cite{Weg}. 

By the Theorem of Darboux, the distributional structure of contact $(2n+1)$-manifolds is locally modeled by the contact structure of the $n$th Heisenberg group $\H^n$ \cite{Darb}. So, we are able to apply the strategies and approaches of sub-Riemannian geometry used to study $\H^n$ to study contact manifolds. Indeed, we prove a biLipschitz version of the Theorem of Darboux which says that the metric structure of a contact $(2n+1)$-manifold (after being endowed with a \cc metric) is locally modeled by the metric space $\H^n$ (Corollary~\ref{biLipschitz Darboux}).

Among the metric properties of $\H^1$, we make use of $\H^1$ being purely 2-unrectifiable in the sense of \cite{Amb}. Indeed, for any $k>n$, the $n$th Heisenberg group $\H^n$ is purely $k$-unrectifiable \cite{Mag04}. Using the biLipschitz version of the Theorem of Darboux, any contact $(2n+1)$-manifold (with \cc metric) is also purely $k$-unrectifiable for $k>n$ (Theorem~\ref{contact 3-mflds purely unrectifiable}).

Once shown that contact 3-manifolds are purely 2-unrectifiable, the properties of the associated Lipschitz homotopy groups listed in Theorem~\ref{lip_htpy_grps_contact_3_mflds} follow from similar tools and results in \cite{Dej} and \cite{Weg}.

\begin{theorem}\label{lip_htpy_grps_contact_3_mflds}
Let $(M,\xi)$ be a contact 3-manifold. Endow $(M,\xi)$ with a sub-Riemannian structure and consider the resulting \cc metric $\dcc{M}$. Then,
\begin{enumerate}
\item $\pilip{1}(M,\dcc{M})$ is uncountably generated, and
\item $\pilip{n}(M,\dcc{M})=0$ for $n\geq 2$.
\end{enumerate}
Furthermore, let $(M',\xi')$ be a contact 3-manifold which is endowed with a sub-Riemannian structure and let $\varphi:(M,\xi)\hookrightarrow(M',\xi')$ be an open distributional embedding.
\begin{enumerate} 
\setcounter{enumi}{2}
\item The homomorphism induced by $\varphi$ on first Lipschitz homotopy groups
\[
\varphi_\#:\pilip{1}(M,\dcc{M})\longrightarrow\pilip{1}(M',\dcc{M'})
\]
is injective.
\end{enumerate}
\end{theorem}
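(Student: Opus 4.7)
The plan is to use the biLipschitz Darboux theorem (Corollary~\ref{biLipschitz Darboux}) and pure $2$-unrectifiability (Theorem~\ref{contact 3-mflds purely unrectifiable}) to transfer the Heisenberg-group results of Dejarnette et al.~\cite{Dej} and Wenger-Young~\cite{Weg} to a general contact $3$-manifold. I would prove (3) first, deduce (1) from it, and then treat (2) by a separate localization argument.

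For (3), let $\varphi:(M,\xi)\hookrightarrow(M',\xi')$ be an open distributional embedding; Corollary~\ref{biLipschitz Darboux} (together with the definition of an open distributional embedding) implies that $\varphi$ is locally biLipschitz between the two \cc metrics and that $\varphi(M)$ is open in $(M',\dcc{M'})$. Given a Lipschitz loop $\gamma:S^1\to M$ with $\varphi\circ\gamma$ bounding a Lipschitz disc $H:D^2\to M'$, the core step is to Lipschitz-homotope $H$ rel $\partial D^2$ into $\varphi(M)$. Pure $2$-unrectifiability of $M'$ makes $H(D^2)$ a set of vanishing $2$-dimensional Hausdorff measure, which enables a Wenger-Young-style ``pushing'' retraction that deforms $H$ off of $M'\setminus\varphi(M)$ with uniform Lipschitz control and without disturbing the boundary. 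Pulling the resulting disc back through the local biLipschitz charts of $\varphi$ yields a Lipschitz null-homotopy of $\gamma$ in $M$, and hence injectivity of $\varphi_\#$.

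Part (1) then follows easily: by Corollary~\ref{biLipschitz Darboux} every point of $M$ has an open neighborhood $U$ biLipschitz equivalent to an open subset of $\H^1$, and the loops of \cite{Dej} generating uncountably many classes in $\pilip{1}(\H^1)$ can be chosen of arbitrarily small diameter, so $\pilip{1}(U)$ is already uncountably generated; part (3) applied to the inclusion $U\hookrightarrow M$ transfers this uncountability to $\pilip{1}(M,\dcc{M})$. For part (2), a Lipschitz map $f:S^n\to M$ with $n\geq 2$ has compact image contained in a finite union of biLipschitz charts to open subsets of $\H^1$; replaying the Wenger-Young null-homotopy construction, which uses only pure $2$-unrectifiability of the target and the local Heisenberg model, both of which $M$ possesses, produces the desired Lipschitz null-homotopy of $f$.

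The main obstacle is part (3). The Wenger-Young deformation was engineered around the self-similar dilations of $\H^1$, and transplanting it to a general contact manifold requires performing the retraction in local biLipschitz charts and patching the resulting deformations across chart overlaps with controlled Lipschitz constants; verifying that pure $2$-unrectifiability of $M'$ is enough to ensure this control is the delicate point.
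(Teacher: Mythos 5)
Your high-level plan (prove (3), deduce (1), handle (2) via Wenger--Young) matches the paper, but the core step of (3) is a genuine gap, and the mechanism you sketch for it would not work as stated. Knowing that $\hm{2}(H(\D^2))=0$ does not by itself produce a Lipschitz deformation of $H$, rel $\partial\D^2$, off the closed set $M'\setminus\varphi(M)$; there is no general ``pushing'' retraction of that kind, and you yourself flag this as the unverified delicate point. The paper's argument is different and avoids the issue entirely: apply Theorem~\ref{Wenger and Young} to the disk $H:\D^2\rightarrow M'$ itself (using only that $M'$ is purely 2-unrectifiable, Theorem~\ref{contact 3-mflds purely unrectifiable}), factoring $H=\phi\circ\psi$ through a metric tree $T$; since $\psi(\partial\D^2)$ is a subtree, $T$ admits a Lipschitz deformation retraction onto it, and composing gives a Lipschitz homotopy rel $\partial\D^2$ from $H$ to a map with image in $H(\partial\D^2)=\Ima(\varphi\circ\alpha)\subset\Ima(\varphi)$ (Lemma~\ref{homotopy of a disk takes image on boundary}) --- a stronger conclusion than merely landing in $\varphi(M)$, obtained with no chart-by-chart Lipschitz control; one then composes with $\varphi^{-1}$, which is locally Lipschitz by Lemma~\ref{contactoembeddings are locally Lipscitz}. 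Your concern that the Wenger--Young deformation ``was engineered around the dilations of $\H^1$'' is misplaced: Theorem 5 of \cite{Weg}, as quoted in Theorem~\ref{Wenger and Young}, holds for an arbitrary purely 2-unrectifiable target, so no transplanting into local Heisenberg charts and no patching across overlaps is needed --- neither in (3) nor in (2), where Corollary~\ref{W_Y Corollary} applies verbatim to $Y=M$ once Theorem~\ref{contact 3-mflds purely unrectifiable} is in hand; your finite-chart ``replaying'' of the construction for (2) is both unnecessary and itself unproved.

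Your route to (1) also contains an unsupported step: it needs $\pilip{1}$ of a small open subset of $\H^1$ to be uncountably generated, which you justify by asserting that the generating loops of \cite{Dej} ``can be chosen of arbitrarily small diameter,'' a claim you neither prove nor cite. The detour is unnecessary: the Theorem of Darboux (Theorem~\ref{darboux}) supplies an open distributional embedding of \emph{all} of $\H^1$ into $(M,\xi)$; since both $\H^1$ and $M$ are purely 2-unrectifiable, part (3) (in the form of Theorem~\ref{inclusion yields injective map of homotopy groups}) applies directly to this embedding and transfers the uncountable generation of $\pilip{1}(\H^1)$ to $\pilip{1}(M,\dcc{M})$ with no smallness of neighborhoods required.
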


The paper is organized as follows. In section 2, we introduce necessary background on contact manifolds and sub-Riemannian manifolds. We then focus on distributional maps between sub-Riemannian manifolds. These are smooth maps whose derivative carries the distribution of the domain into the distribution of the codomain. We show that, with respect to the \cc metrics, any distributional map is locally Lipschitz. That every distributional embedding is locally biLipschitz is an immediate consequence, as is the biLipschitz Darboux theorem. Finally, we show that the unrectifiability of $\H^n$ implies that a contact $(2n+1)$-manifold is purely $k$-unrectifiable for $k>n$. 

In section 3, we recall the definition of Lipschitz homotopy groups. 
%We then focus on purely 2-unrectifiable sub-Riemannian manifolds and determine properties of their Lipschitz homotopy groups. First, we show that a Lipschitz homotopy between smooth embeddings of $\S^1$ with distinct images must sweep out positive 2-dimensional Hausdorff measure. Since we are considering purely 2-unrectifiable spaces, we show that no such Lipschitz homotopies exist. Thus, there is a distinct element of the first Lipschitz homotopy group for every distinct horizontal knot.%Well, if by "knot" one means just the image, not the embedding itself.  -- not a big deal, since, being the intro, you're at liberty to be informal. - David
We then make use of a result of Wenger and Young (Theorem 5 in \cite{Weg}) that says that all Lipschitz maps from a Lipschitz simply connected, quasi-convex space into a purely 2-unrectifiable space factor through a metric tree. An immediate corollary is that all higher Lipschitz homotopy groups are trivial for purely 2-unrectifiable spaces \cite{Weg}. We also use the result of Wenger and Young to show that a distributional embedding of a purely 2-unrectifiable sub-Riemannian manifold into another induces an injective map on the associated first Lipschitz homotopy groups. Thus, since $\pilip{1}(\H^1)$ is uncountably generated \cite[Theorem 4.11 (2)]{Dej}, for any contact 3-manifold the first Lipschitz homotopy group is uncountably generated. Moreover, there is an uncountable subgroup of the first Lipschitz homotopy group for every connected open neighborhood of the chosen base point.

The work and results that follow are part of the author's PhD thesis \cite{dissertation}.

\medskip
\noindent {\bf Acknowledgment}. The author wishes to thank the referees for their time reviewing the paper and for their pointed feedback which was instrumental in improving the clarity and focus of the paper.  Additionally, the author wishes to thank David Ayala and Lukas Geyer for their feedback and guidance throughout the writing process. 

\nopagebreak
\section{Contact 3-manifolds are purely 2-unrectifiable.}

%Lipschitz homotopy groups are a less natural, but more fruitful, tool than horizontal homotopy groups. Horizontal homotopy groups are a natural tool for studying contact manifolds as these groups capture the result of probing the space with smooth maps without introducing further structure on the space. Alas, $\picont{n}$ can be difficult to calculate, as is illustrated in this dissertation when $n>1$. Though introducing a compatible metric adds more structure to the space, and thus detracts from the pureness of the probing, the Lipschitz homotopy groups of a contact manifold are easier to access as `smooth' can be too strict of a condition to work with. By weakening `smooth' to `Lipschitz' and $\picont{n}$ to $\pilip{n}$, some calculations become easier. For example, $\pilip{n}$ is trivial for $n>1$. See \cite{Dej} and \cite{Weg} for papers inspecting the differences between these homotopy groups. 

%Suggestion: Could include something here about how Lipschitz homotopy groups are natural in the context of probing metric spaces.

Any contact 3-manifold is locally modeled by a purely 2-unrectifiable space. Indeed, by the Theorem of Darboux, contact 3-manifolds locally look like copies of $\H^1$ \cite{Darb} and $\H^1$ is a purely 2-unrectifiable space \cite{Amb}. As will be shown, for any point in a contact 3-manifold, there is an open neighborhood of the point such that the distributional embedding guaranteed by Darboux restricted to the open neighborhood is a biLipschitz map with respect to the associated \cc metrics. Thus, these biLipschitz maps will carry this metric condition on $\H^1$ to the contact 3-manifold.  

In fact, we will show a more general result: since a contact $(2n+1)$-manifold is locally modeled by the $n$th Heisenberg group $\H^n$ (again by the Theorem of Darboux) and $\H^n$ is purely $k$-unrectifiable for all $k>n$ \cite{Mag04}, contact $(2n+1)$-manifolds are purely $k$-unrectifiable for $k>n$ when endowed with a \cc metric.

This result will be achieved by inspecting the interplay of distributional maps and the \cc lengths of paths. After covering some background material, we will show that the length of the image of a horizontal path under distributional map, which again is a horizontal path, is bounded. Thus, the distributional embedding guaranteed by the Theorem of Darboux can only distort lengths of paths, and thus distances between points, by a manageable amount. 

Next, we will account for subsets of contact manifolds not necessarily being geodesically convex. We will show that for any open ball in a sub-Riemannian manifold, there is a bounded open subset containing the ball in which distances between points in the ball can be well-approximated by horizontal paths that remain in the new bounded open subset. These tools will be enough to restrict a distributional embedding to a neighborhood such that the restriction is also biLipschitz with respect to the \cc metrics.

% to see there is a bound on the length of the image of a horizontal path under a distributional map. Additionally, since subsets of contact manifolds are not generally geodesically convex, we will consider a version of taking the geodesic hull of a set in order to approximate distances between points via horizontal paths that are near by. 

%The major results of this discussion will apply to more general objects, but contact manifolds are the primary motivator for establishing these results. 

%A comment on why \pi_1^{Lip} is being considered is appropriate.  I have in mind a comment that \pi_n^{Lip} is easier to access ... whereas \pi_n^{H} is harder (for n>1).  

%Inherently, a contact 3-manifold has no metric structure. But, such a space can be endowed with a sub-Riemannian structure which begets a metric on $M$, called the \cc metric, which feels the contact distribution. Such a metric on $M$ associated to the contact manifold $(M,\xi)$ is denoted $\dcc{M}$. Going forward, assume that any contact 3-manifold is endowed with a \cc metric.
%
%Rather than probing the contact structure $(M,\xi)$ with horizontal maps, we will probe the metric structure $(M,\dcc{M})$ with Lipschitz maps. Again, homotopy groups are employed to report back the results of the probing. The notion of homotopy group defined earlier will be repeated here with every instance of ``horizontal,'' replaced by ``Lipschitz.''

\subsection{Contact manifolds, horizontal paths, and distributional embeddings.}

\begin{convention}
Throughout this paper, the term ``manifold'' will refer to a smooth connected manifold, and the term ``distribution'' will refer to a smooth vector subbundle of a tangent bundle.
%We will specifically note when a manifold potentially has boundary. 
The tangent bundle of a manifold $M$ will be denoted $TM$. The derivative of a smooth map $f$ will be denoted $\de{f}$.
\end{convention}

\begin{definition}
A \emph{manifold with a distribution} is a pair $(M,\xi)$ composed of a manifold $M$ and a distribution $\xi\subset TM$. If the distribution is bracket-generating, the pair $(M,\xi)$ is called a \emph{Carnot manifold}. Additionally, if the manifold $M$ is of odd-dimension $2n+1$ and the bracket-generating distribution $\xi$ is co-dimension 1, the pair $(M,\xi)$ is called a \emph{contact $(2n+1)$-manifold}. 
\end{definition}

%For a more thorough definition of Carnot manifolds and bracket-generating distributions, see for example Definition 2.1.1 in \cite{karmanova2008geometry}. 

Contact manifolds are the primary interest of this paper. For a more thorough discussion of Carnot manifolds and bracket-generating distributions, see \cite{karmanova2008geometry}.  

The following example, called the $n$th Heisenberg group,  is the quintessential contact manifold in that all contact manifolds are locally modeled by a Heisenberg group.

\begin{example}[$n$th Heisenberg group]\label{H^n contact}
Let $M=\R^{2n+1}$ with coordinates denoted by $x_1,\ldots,x_n, y_1,\ldots,y_n,t$. Define a co-dimension 1 distribution on $\R^{2n+1}$ by 
\[
\xi^{std}\deq\span(X_1,\ldots,X_n,Y_1,\ldots,Y_n),
\]
where, for $i=1,\ldots,n$, the vector fields $X_i$ and $Y_i$ are defined by
\[
X_i\deq\basis{x_i}+2y_i\basis{t}\text{ and }Y_i\deq\basis{y_i}-2x_i\basis{t}.
\]
A calculation verifies that the $n$th Heisenberg group $\H^{n}\deq(\R^{2n+1},\xi^{std})$ is indeed a contact manifold.

%To calculate the Lie Bracket $[X_i,Y_i]$, consider a smooth real-valued function $f:\R^{2n+1}\longrightarrow\R$. Then, $X_iY_i$ acts on $f$,
%
%\begin{eqnarray*}
%X_i(Y_if) & = &   \left(\basis{x_i}+2y_i\basis{t}\right)\left(\deR{f}{y_i}-2x_i\deR{f}{t}\right) \\				   					
%					& = & \deRtwom{f}{x_i}{y_i}+2y_i\deRtwom{f}{t}{y_i}-2\deR{f}{t}-2x_i\deRtwom{f}{x_i}{t}-4x_iy_i\deRtwo{f}{t},
%\end{eqnarray*}
%and $Y_iX_i$ acts on $f$:
%\[
%Y_i(X_if)=\deRtwom{f}{x_i}{y_i}-2x_i\deRtwom{f}{x_i}{t}+2\deR{f}{t}+2y_i\deRtwom{f}{t}{y_i}-4x_iy_i\deRtwo{f}{t}.
%\]
%So, the Lie Bracket $[X_i,Y_i]$ acting on $f$ yields an equality of functions
%\[
%[X_i,Y_i](f)=X_i(Y_if)-Y_i(X_if)=-4\deR{f}{t}
%\]
%and, since the function $f$ was arbitrary, there is an equality of vector fields, $[X_i,Y_i]=-4\basis{t}$. As $\basis{t}$ is a vector field that is not tangent to $\xi$, 
%\[
%T\R^{2n+1}\cong\xi^{std}~+~\span\left(-4\basis{t}\right)=\xi^{std}~+~\span([X_i,Y_i])
%\]
%for any $i$ and thus $\xi^{std}$ is bracket-generating. This contact manifold is referred to as the \emph{$n$th Heisenberg group} and is denoted by $\H^{n}\deq(\R^{2n+1},\xi^{std})$.\footnote{In the case that $n=1$, the contact manifold can be thought of (erroneously) as possible flight paths of a Golden Eagle. Watch him soar!}
\end{example}

We now describe means of probing the distributional structure of a manifold with a distribution. 

\begin{definition}
Let $(M,\xi)$ be a manifold with a distribution and let $N$ be a manifold with boundary. A smooth map $f:N\longrightarrow M$ is \emph{horizontal}, denoted $f:N\longrightarrow(M,\xi)$, if $\de{f}(TN)\subset \xi$. 
%it is tangent to the distribution:
\begin{center}
\begin{tikzcd}
&& \xi \arrow[dd, hookrightarrow]  \\  \\

TN \arrow[rr, "\de{f}"]\arrow[dd] \arrow[rruu, dashed, "\exists"] && TM \arrow[dd] \\ \\

N \arrow[rr, "f"] && M.
\end{tikzcd}
\end{center}
%The horizontal map $f$ is denoted $f:N\longrightarrow(M,\xi)$. 
%Denote the set of all horizontal maps from $N$ into $(M,\xi)$ by $\cm{N}$.
If $N$ is a closed interval, the map $f$ is a \emph{horizontal path}. If the horizontal map is an embedding, the map $f$ is a \emph{horizontal embedding}.
%The collection of horizontal paths into a manifold endowed with a distribution $(M,\xi)$ is denoted $\hopath(M,\xi)$.
\end{definition}

\begin{remark}
Legendrian knots are examples of horizontal embeddings of $\S^1$ into a contact 3-manifold.
\end{remark}

\begin{definition}\label{contacto}
Let $(M',\xi')$ and $(M,\xi)$ be manifolds with distributions. A smooth map $f:M'\longrightarrow M$ is a \emph{distributional map}, denoted $f:(M',\xi')\longrightarrow(M,\xi)$, if $\de{f}(\xi')\subset\xi$.
%its derivative maps one distribution into the other:
\begin{center}
\begin{tikzcd}
\xi' \arrow[dd, hookrightarrow] \arrow[rr, dashed, "\exists"] && \xi \arrow[dd, hookrightarrow]  \\ \\

TM' \arrow[rr, "\de{f}"]\arrow[dd] && TM \arrow[dd] \\ \\

M' \arrow[rr, "f"] && M.
\end{tikzcd}
\end{center}
If the distributional map is an embedding, the map $f$ is a \emph{distributional embedding}. If the distributional map is between contact manifolds, the map $f$ is a \emph{contact map}.
%If, in addition to being distributional, $f$ is a diffeomorphism (and $f^{-1}$ is distributional), $f$ is referred to as a \emph{distributional diffeomorphism}.
\end{definition}

%\begin{remark}
%In the literature, in the case where a distributional diffeomorphism is between contact manifolds, it is referred to as a contactomorphism. 
%%{\color{blue} Replace instances of 'contacto' with 'distributional.'} 
%\end{remark}

Now that the contact structure on $\H^n$ has been established and we have a means of embedding this structure into other contact manifolds via distributional embeddings, we can make precise that contact manifolds are locally modeled by the Heisenberg group.

%claim: open contactoembeddings have inverses that are also open contactoembeddings (where defined) ((yea, they are continuous so inverses have to be open mappings))

\begin{theorem}[Theorem of Darboux]\label{darboux} %Figure out how to reference better. There are not Theorem names in the paper being referenced here.
Let $(M,\xi)$ be a contact $(2n+1)$-manifold. For every $p\in M$, there exists an open distributional embedding 
\[
\p:\H^n\hookrightarrow(M,\xi)
\]
such that $\p(0)=p$.
\end{theorem}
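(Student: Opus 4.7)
The plan is to prove the local contactomorphism by Moser's stability method, then extend the resulting map to one defined on all of $\H^n$. First, since $\xi$ is a hyperplane field it is locally coorientable, so choose a smooth one-form $\alpha$ defined on a neighborhood of $p$ with $\ker\alpha = \xi$. Let $\alpha_0 = dt + 2\sum_{i=1}^n(x_i\,dy_i - y_i\,dx_i)$ denote the standard Heisenberg contact form, so $\ker\alpha_0 = \xi^{std}$. The symplectic normal form applied to the pair $(\xi_p, d\alpha|_{\xi_p})$ yields smooth coordinates $(x,y,t)$ on a neighborhood of $p$, carrying $p$ to $0$, in which $\alpha$ agrees with $\alpha_0$ at $0$ both as a covector and in its exterior derivative.

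Now interpolate by setting $\alpha_s = (1-s)\alpha_0 + s\alpha$ for $s\in[0,1]$. By continuity and the matching of zeroth- and first-order data at $0$, each $\alpha_s$ is contact on a common neighborhood $U$ of $0$ (using compactness of $[0,1]$). Apply Moser's trick by seeking a time-dependent vector field $Y_s$ tangent to $\ker\alpha_s$, and a function $\mu_s$, so that
\[
\mathcal{L}_{Y_s}\alpha_s + \dot\alpha_s = \mu_s\alpha_s.
\]
Contracting with the Reeb field $R_s$ of $\alpha_s$ forces $\mu_s = \dot\alpha_s(R_s)$, and the remaining equation reduces to $\iota_{Y_s}d\alpha_s = -\dot\alpha_s$ on $\ker\alpha_s$, which admits a unique smooth solution by the nondegeneracy of $d\alpha_s|_{\ker\alpha_s}$. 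Since $\dot\alpha_s(0)=\alpha(0)-\alpha_0(0)=0$, the resulting $Y_s$ vanishes at $0$ for every $s$, so its flow is defined on a uniform neighborhood of $0$ up to time $1$ and fixes $0$. Integrating produces a contactomorphism $\psi: V \to W$ where $V\subset\H^n$ is an open neighborhood of $0$ and $W\subset M$ is an open neighborhood of $p$, with $\psi(0)=p$.

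Finally, to upgrade this local map to an open embedding defined on all of $\H^n$, precompose $\psi$ with a contactomorphism $\Phi: \H^n \to V'$ for some open $V'\subset V$ containing $0$. Such a $\Phi$ can be built by realizing $\H^n$ as the complement of the point at infinity in the standard contact sphere $(\S^{2n+1},\xi_{std})$ and exploiting the transitive action of the contactomorphism group of $\S^{2n+1}$ to move this point-at-infinity arbitrarily close to the image of $0$, thereby squeezing all of $\H^n$ into an arbitrarily small Darboux chart. The composition $\varphi = \psi\circ\Phi : \H^n \hookrightarrow M$ is the desired open distributional embedding with $\varphi(0)=p$. The main obstacle here is this final step: the Moser argument is entirely standard, but packaging all of $\H^n$ inside a prescribed small Darboux neighborhood requires either the sphere-compactification construction above or an explicit contact-Hamiltonian flow that contracts $\H^n$ into a bounded Heisenberg ball.
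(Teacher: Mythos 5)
The paper does not prove this statement at all: it quotes the Theorem of Darboux from the literature, citing Darboux's original paper and Theorem 2.24 of Geiges for a modern statement and proof. Your Moser-trick argument for the local normal form (symplectic linear algebra at the point, the interpolation $\alpha_s=(1-s)\alpha_0+s\alpha$, contactness of $\alpha_s$ on a uniform neighborhood by compactness of $[0,1]$, solving $\iota_{Y_s}d\alpha_s=-\dot\alpha_s$ on $\ker\alpha_s$ with $\mu_s=\dot\alpha_s(R_s)$, and $Y_s(0)=0$ so the flow exists to time $1$ near $0$) is exactly the standard proof that the citation points to, and that part is correct. The issue is that the statement as written in the paper asserts more than the local normal form: the embedding is required to be defined on \emph{all} of $\H^n$, and since an open embedding with $\de{\p}(\xi^{std})\subset\xi$ is automatically a contactomorphism onto its image, this amounts to a contact embedding of the entire standard $\R^{2n+1}$ into an arbitrarily small Darboux chart. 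You correctly identify this globalization as the crux, but your proposed mechanism does not work.

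Concretely: if you realize $\H^n$ as $\S^{2n+1}\setminus\{p_\infty\}$ with its standard structure and apply any contactomorphism of the closed sphere, the image of $\S^{2n+1}\setminus\{p_\infty\}$ is again the complement of a single point of $\S^{2n+1}$ --- a dense open set --- no matter how close the new puncture is to the target point. It is never contained in a small Darboux chart, so ``moving the point at infinity close to the image of $0$'' does not squeeze anything. For the same reason your fallback suggestion needs care: the time-$t$ map of a complete contact vector field is a global contactomorphism, hence surjective, so no single contact-Hamiltonian flow map can have bounded image; one needs a genuinely non-surjective contact embedding. What the statement actually requires is the (true, but not free) fact that standard $\R^{2n+1}$ contact-embeds onto a bounded open subset of itself containing $0$; once you have that, the Heisenberg dilations $\delta_\lambda(x,y,t)=(\lambda x,\lambda y,\lambda^2 t)$, which are contactomorphisms of $\H^n$ fixing $0$, push that bounded image into the Darboux ball produced by your Moser argument, and composing gives $\p$ with $\p(0)=p$ (do also arrange that your squeezing map fixes $0$, which you never state). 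For $n=1$ --- the only case the paper's main theorems use at full strength --- such a squeezing is available, e.g.\ because the standard structure restricted to an open ball in $\R^3$ is tight and hence contactomorphic to $\H^1$ by Eliashberg's uniqueness theorem, or by explicit constructions; in general dimension this step is a known but nontrivial construction that your proposal neither proves nor correctly reduces to the sphere picture. As it stands, the final step is a genuine gap.
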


The Theorem of Darboux was originally proved by Darboux in \cite{Darb}. For a modern statement and proof, see Theorem 2.24 in \cite{Gei}.

In the notation of Theorem~\ref{darboux}, the neighborhood $\p(\H^n)$, along with the associated distributional embedding $\p$, will be referred to as a \emph{Darboux neighborhood}.

%May not need a full Darboux neighborhood. Rather a "nearly Darboux" Theorem could possibly be used. This would be in an attempt to generalize to any Carnot manifold with 2-step distribution

\subsection{Sub-Riemannian manifolds and the Carnot-Carath\'{e}odory metric.}

Rather than probing the distribution of a contact manifold directly, we will endow each contact manifold with a metric sensitive to its distribution and then probe the resulting metric space. 

\begin{definition}
A \emph{sub-Riemannian manifold} is a triple $(M,\xi,g)$ consisting of a Carnot manifold $(M,\xi)$ and a smooth map
\[
g:\xi\times_M\xi\longrightarrow\R,
\]
such that, for each $p\in M$, the map $g:\xi_p \oplus \xi_p \rightarrow \R$ is an inner product on the vector space $\xi_p$. Such a map $g$ is referred to as a \emph{sub-Riemannian metric} on $(M,\xi)$.
\end{definition}

\begin{remark}
Any Carnot manifold can be endowed with a sub-Riemannian metric by restricting a Riemannian metric to the bracket-generating distribution. Going forward, we will assume that each Carnot manifold is endowed with a sub-Riemannian metric.  %Removed from above: Indeed, as any manifold can be endowed with a Riemannian metric, restricting the Riemannian metric to the bracket-generating distribution yields a sub-Riemannian metric.
\end{remark}

%We now make specific reference to the standard sub-Riemannian structure on the $n$th Heisenberg group $\H^n$.

\begin{example}\label{H^n sub} %-\\
Continuing Example~\ref{H^n contact}, the $n$th Heisenberg group $\H^n$ is naturally endowed with a sub-Riemannian metric. Indeed, define a sub-Riemann- ian metric $g$ such that, for each $p\in\H^n$, the vectors
\[
X_1(p),\ldots,X_n(p),Y_1(p),\ldots,Y_n(p)
\]
form an orthonormal basis for $\xi^{std}_p$. Going forward, it will be assumed that $\H^n$ has this sub-Riemannian metric.
\end{example}

To ensure that the metric imposed on a Carnot manifold is sensitive to the the distribution, the metric will be defined as a path metric where only the lengths of horizontal paths are considered.

\begin{definition}
Let $(M,\xi,g)$ be a sub-Riemannian manifold. The \emph{\cc length} of a horizontal path $\gamma:[a,b]\longrightarrow(M,\xi)$ is
\[
\length{M}(\gamma)\deq\int_a^b\sqrt{g(\dot{\gamma}(t),\dot{\gamma}(t))}~dt.
\]
%\end{definition}
%
%%The value $g(\dot{\gamma}(t),\dot{\gamma}(t))$ exists for all $t\in[a,b]$ since the path $\gamma$ is horizontal: $\dot{\gamma}(t)\in\xi_{\gamma(t)}$. Thus, the \cc length of $\gamma$ is well-defined.
%
%
%
%\begin{definition}
%Let $(M,\xi,g)$ be a sub-Riemannian manifold.
The \emph{\cc metric} on $M$ is
\[
\dcc{M}(p,p')\deq\inf\Bigl\{\length{M}(\gamma)\mid\gamma:[a,b]\longrightarrow(M,\xi)\text{ with }\gamma(a)=p\text{ and }\gamma(b)=p'\Bigr\},
\]
for any $p,p'\in M$.
%Removed from definition: \stackrel{\text{horizontal}}{
\end{definition}

%\begin{remark}
%The \cc metric $\dcc{M}$ is in fact a metric on the manifold $M$. Indeed, via Chow-Rashevskii Theorem (Theorem 42 in \cite{Mon}), as the associated distribution is bracket-generating, any two points in the space can be joined by a horizontal path that has finite \cc length. As such, for any points $p,p'\in M$, the metric $\dcc{M}(p,p')$ is defined. 
%% It is well-known that $\dcc{M}$ is a metric on a connected manifold $M$ provided the space has a sub-Riemannian structure. Indeed, via Chow-Rashevskii Theorem, as the associated distribution is bracket-generating, any two points in the space can be joined by a horizontal path. As such, there is an upper bound on the lengths of paths between any two points and the metric $\dcc{M}(p,p')$ is defined for all $p,p'\in M$.
%\end{remark}

\begin{convention}
Going forward, we will assume that any sub-Riemannian manifold is endowed with the \cc metric. The pair $(M,\dcc{M})$ will be used to identify the sub-Riemannian manifold $(M,\xi,g)$ as a metric space endowed with the \cc metric. The open ball centered at $p\in M$ of radius $R>0$ with respect to the \cc metric will be denoted by $\bcc{M}(p,R)$.
\end{convention}

%\subsubsection{Distributional maps and \cc lengths of paths}

%As was the case with manifolds with a distribution, distributional mappings will be the primary means of mapping between sub-Riemannian manifolds. 
%In the case that there are sub-Riemannian manifolds $(M,\xi,g)$ and $(M',\xi',g')$ and a distributional map $f:(M,\xi)\longrightarrow(M',\xi')$, we will denote the map by $f:(M,\xi,g)\longrightarrow(M',\xi',g').$ 

%Possible footnote: This notation is simply a convenience and is not meant to imply that the distributional map $f$ is isometric or feels the sub-Riemannian metrics in anyway.
%-\\
Having endowed Carnot manifolds with a metric, we see how horizontal and distributional maps interact with the \cc metric. %First, we note how much a distributional map can distort \cc \\ length.
%%As noted in Lemma~\ref{contacto of horizontal map is horizontal map}, 
%The image of a horizontal path under a distributional map is horizontal and thus the \cc length of the image is defined. 
%The next result articulates a sense in which the \cc length of a horizontal path is scaled by the derivative of a distributional map. 
%
%This will be important as we will restrict the distributional embedding guaranteed by the Theorem of Darboux in order to produce a map that preserves the \cc metric structure on the contact manifolds. As the \cc metric is defined in terms of lengths of horizontal curves, it is important to know how much distortion can be caused by a distributional map in order to compare the metrics.

\begin{lemma}\label{bounds on lengths of paths}
Let $f:(M',\xi')\longrightarrow(M,\xi)$ be a distributional map between sub-Riemannian manifolds $(M',\xi',g')$ and $(M,\xi,g)$. Let $A\subset M'$ be a compact subset of $M'$. Then, there exists a value $B\geq 0$ such that, for any horizontal path $\gamma:[a,b]\longrightarrow(M',\xi')$ mapping into $A$, the following inequality holds:
\[
\length{M}(f\circ\gamma)\leq B~\length{M'}(\gamma).
\]
\end{lemma}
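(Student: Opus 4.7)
The plan is to apply the chain rule to relate $f\circ\gamma$ to the derivative $\de{f}$ acting on $\dot\gamma$, observe that $\de{f}$ restricted to $\xi'$ lands in $\xi$ by the distributional hypothesis, and then use compactness of $A$ to extract a uniform bound on the pointwise operator norm of $\de{f}|_{\xi'}$ measured with respect to $g'$ and $g$. Integrating the resulting pointwise inequality produces the desired constant $B$.

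First, fix a horizontal path $\gamma\colon[a,b]\to(M',\xi')$ with image in $A$. The chain rule gives $\tfrac{d}{dt}(f\circ\gamma)(t)=\de{f}(\dot\gamma(t))$, and since $\dot\gamma(t)\in\xi'_{\gamma(t)}$ and $f$ is distributional, $\de{f}(\dot\gamma(t))\in\xi_{f(\gamma(t))}$. In particular $f\circ\gamma$ is itself a horizontal path, so
\[
\length{M}(f\circ\gamma)=\int_a^b\sqrt{g\bigl(\de{f}(\dot\gamma(t)),\de{f}(\dot\gamma(t))\bigr)}\,dt.
\]

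Next, for each $p\in M'$ consider the linear map $\de{f}_p|_{\xi'_p}\colon(\xi'_p,g'_p)\to(\xi_{f(p)},g_{f(p)})$ between finite-dimensional inner product spaces, and define its operator norm
\[
N(p)\deq\sup\Bigl\{\sqrt{g\bigl(\de{f}(v),\de{f}(v)\bigr)}\;\Big|\;v\in\xi'_p,\;g'(v,v)=1\Bigr\}.
\]
Because $\xi'$ is a smooth subbundle of $TM'$, the metrics $g'$ and $g$ are smooth on their respective bundles, and $\de{f}$ is smooth, the function $N\colon M'\to[0,\infty)$ is continuous in $p$. Since $A$ is compact, $B\deq\sup_{p\in A}N(p)$ is finite.

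Finally, the pointwise bound
\[
\sqrt{g(\de{f}(\dot\gamma(t)),\de{f}(\dot\gamma(t)))}\le N(\gamma(t))\sqrt{g'(\dot\gamma(t),\dot\gamma(t))}\le B\sqrt{g'(\dot\gamma(t),\dot\gamma(t))}
\]
holds for all $t\in[a,b]$, and integrating against $dt$ and comparing with the definition of $\length{M'}(\gamma)$ gives $\length{M}(f\circ\gamma)\le B\cdot\length{M'}(\gamma)$. I do not expect a serious obstacle: the step most in need of care is the continuity of $N$, which reduces to the standard fact that the operator norm of a continuous family of linear maps between continuously varying inner-product fibers is continuous. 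Concretely, one can trivialize $\xi'$ locally by a smooth $g'$-orthonormal frame, express $\de{f}$ restricted to $\xi'$ as a matrix of smooth functions, and read off continuity of $N$ from the continuous dependence of matrix norms on their entries.
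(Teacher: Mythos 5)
Your proposal is correct and follows essentially the same argument as the paper: bound the operator norm of $\de{f}$ restricted to $\xi'$ uniformly on the compact set $A$, then apply the pointwise estimate inside the length integral. The only difference is that you spell out the continuity of the operator norm via local orthonormal frames, a step the paper's proof asserts implicitly from compactness.
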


\begin{proof}

First, since the subset $A$ is compact, there exists a non-negative value $B\geq 0$ such that $||D_pf||\leq B$ for all elements $p\in A$.

%First, consider the operator norm evaluated on the derivative of the map $f$ on the subset $A\subset M'$:
%\[
%||D_{(-)}f||:A\longrightarrow [0,\infty).
%\]
%This map is a continuous, real-valued map with compact domain. Thus, the map has an upper bound $B\geq 0$.
%% Used in this proof: Lemma~\ref{contacto of horizontal map is horizontal map}

Now, let $\gamma$ be an horizontal path in the manifold with distribution $(M',\xi')$ such that its image lies in the compact subset $A$. The composition $f\circ\gamma$ is a horizontal path in $(M,\xi)$. Then,
\begin{eqnarray*}
\length{M}(f\circ\gamma)
 & = & 
  \int_a^b\sqrt{g\left(\der{f}{\gamma(t)}(\dot{\gamma}(t)), \der{f}{\gamma(t)}(\dot{\gamma}(t))\right) }~dt \\
& \leq & 
 \int_a^b ||\der{f}{\gamma(t)}|| \sqrt{g'\left(\dot{\gamma}(t), \dot{\gamma}(t)\right) }~dt. \\
& \leq &   
  \int_a^b B \sqrt{g'\left(\dot{\gamma}(t), \dot{\gamma}(t)\right) }~dt \\					
					& = & 
					B~\length{M'}(\gamma).
\end{eqnarray*}

\end{proof}

We now describe means of probing the metric structure. Let $X$ and $Y$ be metric spaces with metrics $d^X$ and $d^Y$ respectively.

\begin{definition}
A map $f:X\longrightarrow Y$ is \emph{Lipschitz} if there exists $L\geq0$ such that for all $x,x'\in X$
\[
d^Y(f(x),f(x'))\leq L~d^X(x,x').
\]
Denote the set of all Lipschitz maps from $X$ to $Y$ by $\lipmap(X,Y).$ Furthermore, if $x_0\in X$ and $y_0\in Y$ are base points, the set of all based Lipschitz maps from $X$ to $Y$ is denoted by $\lipmap_{y_0}(X,Y).$
\end{definition}

%Often, we will consider a slightly weaker class of maps; locally Lipschitz maps. 
\begin{definition}
A map $f:X\longrightarrow Y$ is \emph{locally Lipschitz} if, for all $p\in X$, there exists an open neighborhood $p\in U\subset X$ such that $f|_U$ is Lipschitz. 
\end{definition}

%\begin{notation}
%A map $f:X\longrightarrow Y$ between metric spaces $(X,d^X)$ and $(Y,d^Y)$ that is Lipschitz or locally Lipschitz will be denoted by
%\[
%f:(X,d^X)\longrightarrow (Y,d^Y)
%\]
%when it is necessary to make the metrics clear.
%\end{notation}

Lipschitz and locally Lipschitz maps are natural choices to substitute for smooth maps as smooth maps between Riemannian manifolds are locally Lipschitz with respect to the associated path metrics. As will be shown in Lemma~\ref{contactoembeddings are locally Lipscitz}, distributional maps between sub-Riemannian manifolds are locally Lipschitz with respect to the associated \cc metrics.

We now define a notion of an embedding between metric spaces. BiLipschitz bijections, for the purposes of this paper, are the appropriate notion of equivalence between metric spaces. %and its image.

\begin{definition}
A Lipschitz map $\p:X\longrightarrow Y$ is \emph{biLipschitz} if $\p$ is injective and its inverse map $\p^{-1}:\p(X)\longrightarrow X$ is also Lipschitz with respect to the metric $d^Y$ restricted to $\p(X)$. 
\end{definition}

%As with Lipschitz maps, we have a parallel notion of \emph{locally biLipschitz} mappings. 

\begin{definition}
A locally Lipschitz map $\p:X\longrightarrow Y$ is \emph{locally biLipschitz} if $\p$ is injective and for all $p\in X$, there exists an open neighborhood $p\in U\subset X$ such that $\p|_U$ is biLipschitz.
\end{definition}

%old def of locally bilipschitz
%\begin{definition}
%A locally Lipschitz map $\p:X\longrightarrow Y$ is \emph{locally biLipschitz} if $\p$ is injective and its inverse map
%\[
%\p^{-1}:\p(X)\longrightarrow X
%\]
%is also locally Lipschitz with respect to the metric $d^Y$ restricted to $\p(X)$.
%\end{definition}

\subsection{BiLipschitz Darboux theorem.}

As a consequence of the Theorem of Darboux, all contact $(2n+1)$-manifolds are locally modeled on $\H^n$, which, when endowed with the metric $\dcc{\H^n}$, is purely $k$-unrectifiable for $k>n$ \cite{Mag04}. In order to relay this metric quality on $\H^n$ to a contact $(2n+1)$-manifold $(M,\xi)$, we require an adjustment of the Theorem of Darboux. It will be shown in Corollary~\ref{biLipschitz Darboux} that the distributional embeddings of $\H^n$ into $(M,\xi)$ guaranteed by Darboux can be restricted such that the restrictions are biLipschitz with respect to the associated \cc metrics. 

The idea of utilizing the Theorem of Darboux to construct biLipschitz embeddings has appeared in the literature before. See for instance below Corollary 1.4 in \cite{LDOW14} or the proof of Proposition 4.1.2 in \cite{FLT19}.

It is worth noting that it is not immediate that the distributional embeddings guaranteed by Darboux are locally biLipschitz with respect to \cc metrics. The distributional embeddings are smooth and thus, assuming there are Riemannian metrics on the associated manifolds, locally biLipschitz with respect to the path metrics. But, these path metrics are not necessarily biLipschitz equivalent to the \cc metrics. Indeed, it is known for any sub-Riemannian manifold that these two metrics are not biLipschitz equivalent (Theorem 2.10 in \cite{Mon}).  

Thus, to guarantee that these distributional embeddings are taken to be locally biLipschitz, or even locally Lipschitz, we must better understand the \cc metric, in particular, where horizontal curves approximating the distance between two points live. As the \cc metric is defined in terms of lengths of horizontal curves, it is desirable to know how a distributional map can distort these lengths. Lemma~\ref{bounds on lengths of paths} is a tool for bounding lengths of paths that live in a given compact set. Choosing an open subset of the domain that is bounded then becomes the focus.

As Lemma~\ref{bounds on lengths of paths} yields a bound for horizontal paths that remain in a compact subset, it is important that the bounded open subset contains horizontal paths that well-approximate the \cc distance between some set of points. In practice, we cannot expect that the set containing the points and the set containing the horizontal paths to be equal. Given an arbitrary open subset of a contact manifold, it is unlikely that it is \emph{geodetically convex}, i.e., contains all length-minimizing horizontal paths between all of its points. Indeed, it is known that the only geodetically convex open subset of $\H^1$ is itself \cite{Monti}. 
%This is in contrast to the Euclidean case where open balls are convex.

%-\\
So, we should not expect to be able to well-approximate Carnot-Carath\'{e}od- ory distance between points in a given bounded open subset via horizontal paths that remain in the open subset. Rather, given a bounded open subset, there is a larger but still bounded open subset of the ambient space in which the \cc distance between points in the former open subset can be well-approximated via horizontal paths that map into the latter.

\begin{lemma}\label{geodesic hull subset}
Let $(M,\xi,g)$ be a sub-Riemannian manifold. Consider the open ball $\bcc{M}(p,R)\subset M$. The set
\[
GH(p,R)\deq\bigcup_{q\in\bcc{M}(p,R)}\bcc{M}(q,2R)
\]
satisfies the following properties:
\begin{enumerate}
\item $GH(p,R)$ is an open subset of $M$, bounded with respect to $\dcc{M}$, and contains $\bcc{M}(p,R)$.
%\item For $x,y\in\bcc{M}(p,R)$, $\dcc{M}(x,y)<2R$. 
\item Let $x,y\in\bcc{M}(p,R)$ and let  $0<\varepsilon<2R-\dcc{M}(x,y)$. Then there exists a horizontal path 

\begin{center}
\begin{tikzcd}
{[0,1]} \arrow[rr, "\gamma_\varepsilon"] \arrow[dr] && (M,\xi)  \\ 

& (GH(p,R),\xi) \arrow[ur, hookrightarrow]&
\end{tikzcd}
\end{center}
from $x$ to $y$ such that
\[
\dcc{M}(x,y)<\length{M}(\gamma_\varepsilon)\leq\dcc{M}(x,y)+\varepsilon<2R.
\]
\end{enumerate}

\end{lemma}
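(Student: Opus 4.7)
The plan is to verify both parts essentially from the triangle inequality for $\dcc{M}$ and the definition of $\dcc{M}$ as an infimum of horizontal-path lengths. The set $GH(p,R)$ is designed to be just large enough to contain near-geodesic horizontal paths between any pair of points of $\bcc{M}(p,R)$, and this observation drives both items at once.

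For part (1), openness is immediate since $GH(p,R)$ is a union of open metric balls, and the containment $\bcc{M}(p,R)\subset GH(p,R)$ holds because each $q\in\bcc{M}(p,R)$ lies in its own ball $\bcc{M}(q,2R)$. For boundedness, I would show that $GH(p,R)\subset\bcc{M}(p,3R)$: if $z\in\bcc{M}(q,2R)$ for some $q\in\bcc{M}(p,R)$, then the triangle inequality gives $\dcc{M}(p,z)\leq\dcc{M}(p,q)+\dcc{M}(q,z)<R+2R=3R$.

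For part (2), I would first observe that $\dcc{M}(x,y)\leq\dcc{M}(x,p)+\dcc{M}(p,y)<2R$, so the constraint on $\varepsilon$ is non-vacuous. By definition of $\dcc{M}$ as an infimum, choose a horizontal path $\gamma_\varepsilon:[0,1]\to M$ from $x$ to $y$ with $\length{M}(\gamma_\varepsilon)\leq\dcc{M}(x,y)+\varepsilon<2R$; the strict lower bound $\dcc{M}(x,y)<\length{M}(\gamma_\varepsilon)$ can always be arranged, either because the infimum is not attained or, when it is, by first shrinking $\varepsilon$ slightly and then appending a short horizontal wiggle at one endpoint. The heart of the argument is then to check that the image of $\gamma_\varepsilon$ lies in $GH(p,R)$: for every $z$ in this image, the restriction of $\gamma_\varepsilon$ from $x$ to $z$ is again a horizontal path of length at most $\length{M}(\gamma_\varepsilon)<2R$, so $\dcc{M}(x,z)<2R$, i.e.\ $z\in\bcc{M}(x,2R)$. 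Since $x\in\bcc{M}(p,R)$, the ball $\bcc{M}(x,2R)$ is one of the balls in the union defining $GH(p,R)$, hence $z\in GH(p,R)$.

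I do not anticipate a significant obstacle; the argument is quite direct. The only subtle points are the strict lower bound on $\length{M}(\gamma_\varepsilon)$ discussed above, and recognizing the role of the factor $2$ in the radius $2R$ used to define $GH(p,R)$: it is exactly the slack that makes the triangle-inequality estimate $\dcc{M}(x,z)<2R$ work, so that a near-geodesic horizontal path between two arbitrary points of $\bcc{M}(p,R)$ is guaranteed to remain inside $GH(p,R)$.
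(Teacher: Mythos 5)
Your proposal is correct and follows essentially the same route as the paper: openness and the inclusion $GH(p,R)\subset\bcc{M}(p,3R)$ by the triangle inequality for part (1), and for part (2) choosing a near-minimizing horizontal path of length at most $\dcc{M}(x,y)+\varepsilon<2R$ and using restrictions $\gamma_\varepsilon|_{[0,t]}$ to place its image in $\bcc{M}(x,2R)\subset GH(p,R)$. In fact you treat the strict lower bound $\dcc{M}(x,y)<\length{M}(\gamma_\varepsilon)$ more carefully than the paper, which simply asserts it when invoking the infimum.
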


Property \emph{(2)} is what is meant by \cc distance between two points being well-approximated via horizontal paths.

\begin{proof}

\emph{(1)} The subset $GH(p,R)$ is the union of open balls and is thus an open subset. From the definition, it is obvious that $GH(p,R)$ contains the open ball $\bcc{M}(p,R)$.

To see that $GH(p,R)$ is bounded, consider the distance between $p$ and an arbitrary element $x\in GH(p,R)$. By definition of $GH(p,R)$, there exists $q\in\bcc{M}(p,R)$ such that $x\in\bcc{M}(q,2R)$. By triangle inequality,
\[
\dcc{M}(x,p)\leq\dcc{M}(x,q)+\dcc{M}(q,p)<2R+R=3R
\]
and thus $x\in\bcc{M}(p,3R)$. So, $GH(p,R)$ is contained in the open ball $\bcc{M}(p,3R)$ and is therefore bounded. 

%Longer version of the "obvious" argument above: Also, let $q\in\bcc{M}(p,R)$. As $q$ is contained in the open ball $\bcc{M}(q,2R)$, by choice of $GH(p,R)$, the point $q$ is in $GH(p,R)$. Since $q$ was an arbitrary element of the open ball $\bcc{M}(p,R)$, the subset $GH(p,R)$ contains $\bcc{M}(p,R)$.

%Before verifying the existence of such a horizontal path, note that there is a bound on how far points in $\bcc{M}(p,R)$ can be from one another. Let $x,y\in\bcc{M}(p,R)$. By triangle inequality, 
%\[
%\dcc{M}(x,y)\leq\dcc{M}(x,p)+\dcc{M}(p,y)<R+R=2R.
%\]
%%Also, $y\in\bcc{M}(x,2R)$.
%Removed from below

\emph{(2)} Let $0<\varepsilon<2R-\dcc{M}(x,y)$. By the infimum definition of $\dcc{M}$, there exists a horizontal path $\gamma_\varepsilon:[0,1]\longrightarrow(M,\xi)$ such that $\gamma_\varepsilon(0)=x$, $\gamma_\varepsilon(1)=y$, and
\[
\dcc{M}(x,y)<\length{M}(\gamma_\varepsilon)\leq\dcc{M}(x,y)+\varepsilon<2R.
\]
It remains to be verified that $\gamma_\varepsilon$ maps into $GH(p,R)$. It is sufficient to see that $\gamma_\varepsilon$ maps into $\bcc{M}(x,2R)\subset GH(p,R)$. 

Take $t\in[0,1]$. The restriction $\gamma_\varepsilon|_{[0,t]}$ is a horizontal path in $(M,\xi)$ connecting $x$ and $\gamma_\varepsilon(t)$. By the infimum definition of $\dcc{M}$, the \cc distance between $x$ and $\gamma_\varepsilon(t)$ is no more than the length of this restriction; 
\[
\dcc{M}(x,\gamma_\varepsilon(t))\leq\length{M}(\gamma_\varepsilon|_{[0,t]}). 
\]

% As noted in Lemma~\ref{restricted paths are shorter}, 
Obviously, the length of $\gamma_\varepsilon|_{[0,t]}$ is no more than the length of $\gamma_\varepsilon$. Thus the following inequality holds,
\[
\dcc{M}(x,\gamma_\varepsilon(t))\leq\length{M}(\gamma_\varepsilon|_{[0,t]})\leq\length{M}(\gamma_\varepsilon)\leq\dcc{M}(x,y)+\varepsilon<2R
\]
and therefore $\gamma_\varepsilon(t)\in\bcc{M}(x,2R)$.

\end{proof}

So, given an open ball with respect to the \cc metric, there is a bounded open subset that contains all horizontal paths that well-approximate the \cc distance between points in the ball. We will use the bound guaranteed by Lemma~\ref{bounds on lengths of paths} on this larger bounded set to guarantee that distributional maps are locally Lipschitz.

{ %\color{blue}

\begin{lemma}\label{contactoembeddings are locally Lipscitz}
Let $(M',\xi',g')$ and $(M,\xi,g)$ be sub-Riemannian manifolds and let $\p:(M',\xi')\longrightarrow(M,\xi)$ be a distributional map. Then, the map 
\[
\p:(M',\dcc{M'})\longrightarrow(M,\dcc{M})
\]
is locally Lipschitz with respect to the \cc metrics.
\end{lemma}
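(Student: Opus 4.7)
My plan is to localize around a given point $p \in M'$ and combine the two prior lemmas to produce a Lipschitz bound on a neighborhood of $p$. First, I would choose a radius $R > 0$ small enough that $\overline{\bcc{M'}(p, 3R)}$ is compact in $M'$; this is possible since the \cc metric induces the manifold topology on a sub-Riemannian manifold (by Chow--Rashevsky), so sufficiently small \cc balls are precompact. This provides the raw material for applying Lemma~\ref{bounds on lengths of paths}.

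Next, I would set $A := \overline{GH(p, R)}$, where $GH(p, R)$ is the ``geodesic hull'' constructed in Lemma~\ref{geodesic hull subset}. By part (1) of that lemma, $GH(p, R) \subset \bcc{M'}(p, 3R)$, so $A$ is compact. Lemma~\ref{bounds on lengths of paths} then supplies a constant $B \geq 0$ such that $\length{M}(\varphi \circ \gamma) \leq B \length{M'}(\gamma)$ for every horizontal path $\gamma$ whose image lies in $A$.

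The computation itself is then short. For $x, y \in \bcc{M'}(p, R)$ and any $\varepsilon \in (0, 2R - \dcc{M'}(x, y))$, part (2) of Lemma~\ref{geodesic hull subset} produces a horizontal path $\gamma_\varepsilon$ from $x$ to $y$ lying in $GH(p, R) \subset A$ with $\length{M'}(\gamma_\varepsilon) \leq \dcc{M'}(x, y) + \varepsilon$. Since $\varphi$ is distributional, $\varphi \circ \gamma_\varepsilon$ is a horizontal path from $\varphi(x)$ to $\varphi(y)$ in $(M, \xi)$, so
\[
\dcc{M}(\varphi(x), \varphi(y)) \leq \length{M}(\varphi \circ \gamma_\varepsilon) \leq B \length{M'}(\gamma_\varepsilon) \leq B\bigl(\dcc{M'}(x, y) + \varepsilon\bigr).
\]
Letting $\varepsilon \to 0$ gives $\dcc{M}(\varphi(x), \varphi(y)) \leq B \dcc{M'}(x, y)$ for all $x, y \in \bcc{M'}(p, R)$, so $\varphi$ is Lipschitz on this open neighborhood of $p$.

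The main obstacle I anticipate is ensuring that the approximating horizontal path actually remains in a precompact set. Arbitrary open subsets of a sub-Riemannian manifold need not be geodetically convex (indeed, as noted earlier, the only geodetically convex open subset of $\H^1$ is $\H^1$ itself), so one cannot simply confine $\gamma_\varepsilon$ to $\bcc{M'}(p, R)$ and quote Lemma~\ref{bounds on lengths of paths} there. This is exactly the difficulty Lemma~\ref{geodesic hull subset} was built to resolve, so with that lemma in hand the only remaining subtlety is the precompactness of small \cc balls, which follows from the compatibility of the \cc and manifold topologies.
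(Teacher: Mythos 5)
Your proof is correct and follows essentially the same route as the paper: approximate the \cc distance between points of a small ball by horizontal paths confined to the geodesic hull of Lemma~\ref{geodesic hull subset}, bound their image lengths via Lemma~\ref{bounds on lengths of paths} on a compact set containing that hull, and let $\varepsilon\to 0$. The only difference is trivial bookkeeping of radii (you take $\overline{\bcc{M'}(p,3R)}$ compact and work on $\bcc{M'}(p,R)$, while the paper takes $\overline{\bcc{M'}(p,R)}$ compact and works on $\bcc{M'}(p,R/4)$), and your explicit justification of the precompactness of small \cc balls is a welcome addition.
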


%Old but not so old version of lemma: Let $(M',\xi',g')$ and $(M,\xi,g)$ be sub-Riemannian manifolds. Let $p\in U'\subset M'$ be an open neighborhood of a point $p$ in $M'$. For any distributional map $\p:(U',\xi'|_{U'})\longrightarrow(M,\xi)$, there exists an open neighborhood $p\in V'\subset U'$ such that 
%\[
%\p|_{V'}:(V',\xi'|_{V'})\longrightarrow(M,\xi)
%\]
%is Lipschitz.

% Old version of Lemma: Let $(M',\xi',g')$ and $(M,\xi,g)$ be sub-Riemannian manifolds and $U'\subset M'$ be an open subset. Take a point $p\in U'$ and a radius $R>0$ such that the closed ball $\closure{\bcc{M'}(p,R)}$ is compact and is contained in $U'$. % since the d_{cc} metric on M’ induces its standard underlying topology, which is locally compact and Hausdorff and therefore “normal”, for any p\in U there always exists such an R > 0 .
%For any open distributional embedding $\p:(U',\xi'|_{U'})\hookrightarrow(M,\xi)$, there exists a neighborhood $p\in V'\subset \closure{\bcc{M'}(p,R)}$ such that 
%\[
%\p|_{V'}:(V',\xi'|_{V'})\hookrightarrow(M,\xi)
%\]
%is Lipschitz.

\begin{proof}

Fix a point $p\in M'$. Take a radius $R>0$ such that the closed ball $\closure{\bcc{M'}(p,R)}$ is compact. Let $V'=\bcc{M'}(p,R/4)$.

Let $q,q'\in V'$. By Lemma~\ref{geodesic hull subset}, there exists an open and bounded subset $GH(p,R/4)\subset M'$ containing $V'$ in which the \cc distance between $q$ and $q'$ can be well-approximated by lengths of horizontal paths in $GH(p,R/4)$. Also, the subset $GH(p,R/4)\subset\bcc{M'}(p,3R/4)\subset \bcc{M'}(p,R)$ is contained in the compact subset $\closure{\bcc{M'}(p,R)}$.

%Changed in the above paragraph: $GH(p,R/4)\subset\bcc{M'}(p,3R/4)\subset U'$

Let $\varepsilon>0$ be given. Then, there exists a horizontal path $\gamma_\varepsilon$ contained in the open subset $GH(p,R/4)$ connecting $q$ and $q'$ such that
\[
\length{M'}(\gamma_\varepsilon)\leq\dcc{M'}(q,q')+\varepsilon.
\]
Now, the open subset $GH(p,R/4)$ is contained in the compact subset $\closure{\bcc{M'}(p,R)}$. By Lemma~\ref{bounds on lengths of paths}, for the distributional map $\p$, there exists a value $B\geq0$ independent of $\varepsilon$ and $\gamma_\varepsilon$ such that 
\[
\length{M}(\p\circ\gamma_\varepsilon)\leq B~\length{M'}(\gamma_\varepsilon).
\]

%Pulled from paragraph above: By Proposition~\ref{upperbound}, there exists $B>0$ such that $||\der{\p}{p'}||<B$ for all $p'\in GH(p,R/4)$.

Since $\p\circ\gamma_\varepsilon$ is a horizontal path in $M$ connecting the points $\p(q)$ and $\p(q')$, by the infimum definition of the metric, 
\[
\dcc{M}(\p(q),\p(q'))\leq\length{M}(\p\circ\gamma_\varepsilon).
\]
Stringing these inequalities together, we get the following:
\[
\dcc{M}(\p(q),\p(q'))\leq\length{M}(\p\circ\gamma_\varepsilon)\leq B~\length{M'}(\gamma_\varepsilon)\leq B~(\dcc{M'}(q,q')+\varepsilon).
\]
As $\varepsilon$ can be taken to be arbitrarily small, $\dcc{M}(\p(q),\p(q'))\leq B~\dcc{M'}(q,q')$. Therefore, the map $\p$ is Lipschitz on the neighborhood $V'$ of the point $p$. Since $p$ was arbitrary, $\p$ is locally Lipschitz.

\end{proof}

\begin{remark}\label{remark_horizontal_is_lipschitz} %-\\
Since Riemannian manifolds are sub-Riemannian manifolds where the distribution is taken to be the entire tangent bundle, Lemma~\ref{contactoembeddings are locally Lipscitz} also implies that smooth maps between Riemannian manifolds and horizontal maps from a Riemannian manifold into a sub-Riemannian manifold are locally Lipschitz with respect to the associated path metrics. If in addition a horizontal map is an embedding with a compact domain, the mapping is biLipschitz \cite[Theorem 3.1]{Dej}.
\end{remark}

This strategy can be used to guarantee that any distributional embedding is locally biLipschitz with respect to the \cc metrics. On the image of such a distributional embedding, the inverse map is also a distributional map. Indeed, as will be shown in the following argument, Lemma~\ref{contactoembeddings are locally Lipscitz} yields that this inverse map is locally Lipschitz and thus the original map is locally biLipschitz.

\begin{lemma}\label{contactoembeddings are locally bilipschitz}
Let $(M',\xi',g')$ and $(M,\xi,g)$ be sub-Riemannian manifolds and let $\p:(M',\xi')\hookrightarrow(M,\xi)$ be an open distributional embedding. Then the map 
\[
\p:(M',\dcc{M'})\hookrightarrow(M,\dcc{M})
\]
is locally biLipschitz with respect to the associated \cc metrics. 
\end{lemma}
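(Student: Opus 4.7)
The plan is to follow the hint given in the discussion preceding the lemma: reduce everything to Lemma~\ref{contactoembeddings are locally Lipscitz} applied to both $\p$ and its inverse. Since Lemma~\ref{contactoembeddings are locally Lipscitz} already gives that $\p$ itself is locally Lipschitz with respect to the \cc metrics, only the local Lipschitz property of $\p^{-1}$ needs to be produced. Because $\p$ is an open embedding, $\p(M')$ is open in $M$ and $\p:M'\to\p(M')$ is a diffeomorphism; at each $p\in M'$ the derivative $D_p\p$ is a linear isomorphism carrying $\xi'_p$ into $\xi_{\p(p)}$, and a dimension count (using that $\rank\xi'=\rank\xi$) forces $D_p\p(\xi'_p)=\xi_{\p(p)}$. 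Hence $\p^{-1}:(\p(M'),\xi|_{\p(M')})\to (M',\xi')$ is itself a distributional map between sub-Riemannian manifolds.

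The main obstacle will be that a direct invocation of Lemma~\ref{contactoembeddings are locally Lipscitz} on $\p^{-1}$ only bounds $\dcc{M'}$ by the \emph{intrinsic} Carnot-Carath\'{e}odory metric of the open sub-Riemannian manifold $\p(M')$, which a priori can be strictly larger than $\dcc{M}$ restricted to $\p(M')$ (horizontal paths realizing the latter are allowed to wander through all of $M$). To close this gap one must show that, locally, the two metrics agree because sufficiently short approximating geodesics can be chosen inside $\p(M')$. This is exactly the content of Lemma~\ref{geodesic hull subset}.

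Concretely, fix $p\in M'$ and set $p_0=\p(p)$. Since $\p(M')$ is open, choose $R>0$ small enough that $\closure{\bcc{M}(p_0,R)}$ is compact and contained in $\p(M')$, and put $V=\bcc{M}(p_0,R/4)$. For any $q,q'\in V$ and any $0<\varepsilon<R/2-\dcc{M}(q,q')$, Lemma~\ref{geodesic hull subset} produces a horizontal path $\gamma_\varepsilon:[0,1]\to(M,\xi)$ from $q$ to $q'$ whose image lies in $GH(p_0,R/4)\subset\bcc{M}(p_0,3R/4)\subset\p(M')$ and satisfies $\length{M}(\gamma_\varepsilon)\leq\dcc{M}(q,q')+\varepsilon$. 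Because $\gamma_\varepsilon$ stays inside $\p(M')$, the composite $\p^{-1}\circ\gamma_\varepsilon$ is a horizontal path in $(M',\xi')$ whose image lies in the compact set $\p^{-1}(\closure{GH(p_0,R/4)})\subset M'$.

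To finish, apply Lemma~\ref{bounds on lengths of paths} to the distributional map $\p^{-1}$ on this compact set to obtain a constant $B\geq 0$, depending only on $p$ and $R$, such that
\[
\dcc{M'}\bigl(\p^{-1}(q),\p^{-1}(q')\bigr)\leq\length{M'}(\p^{-1}\circ\gamma_\varepsilon)\leq B\,\length{M}(\gamma_\varepsilon)\leq B\bigl(\dcc{M}(q,q')+\varepsilon\bigr).
\]
Letting $\varepsilon\to 0$ yields $\dcc{M'}(\p^{-1}(q),\p^{-1}(q'))\leq B\,\dcc{M}(q,q')$, so $\p^{-1}$ is Lipschitz on the neighborhood $\p(V)$ of $p_0$. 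Combined with the local Lipschitz property of $\p$ from Lemma~\ref{contactoembeddings are locally Lipscitz}, this shows $\p$ is locally biLipschitz, as claimed.
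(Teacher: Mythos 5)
Your argument is correct and uses the paper's toolkit, but it takes a more explicit route than the paper's own proof. The paper disposes of the lemma by applying Lemma~\ref{contactoembeddings are locally Lipscitz} twice: once to $\p$, and once to $\p^{-1}|_{\p(V')}$, regarded as a distributional map from the open sub-Riemannian manifold $(\p(V'),\xi|_{\p(V')})$ into $(V',\xi'|_{V'})$, then intersects the two resulting neighborhoods. You apply that lemma only to $\p$, and for the inverse you re-run its proof by hand inside the ambient target $M$ around $\p(p)$, using Lemma~\ref{geodesic hull subset} and Lemma~\ref{bounds on lengths of paths} directly. What this buys is exactly the subtlety you flag: quoting Lemma~\ref{contactoembeddings are locally Lipscitz} for $\p^{-1}$ yields a Lipschitz bound against the \emph{intrinsic} \cc metric of the open subset $\p(V')$, which dominates, and is not a priori comparable to, the restriction of $\dcc{M}$; by choosing $R$ with $\closure{\bcc{M}(\p(p),R)}\subset\p(M')$ compact and approximating $\dcc{M}$-distances by ambient horizontal paths confined to $GH(\p(p),R/4)\subset\p(M')$, you bound $\dcc{M'}(\p^{-1}(q),\p^{-1}(q'))$ directly by $B\,\dcc{M}(q,q')$. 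In this respect your write-up is more careful than the paper's two-line proof, which leaves the intrinsic-versus-restricted metric comparison implicit; the paper's version is shorter but silently relies on the same kind of geodesic-hull localization to be fully rigorous.

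Two small points. In the last step, Lemma~\ref{bounds on lengths of paths} should be invoked with the compact set $\closure{GH(\p(p),R/4)}$, a subset of the domain $\p(M')$ of the distributional map $\p^{-1}$ (this is where the operator norm of $D\p^{-1}$ must be bounded), rather than with its image $\p^{-1}\bigl(\closure{GH(\p(p),R/4)}\bigr)\subset M'$; both sets are compact, so the fix is purely notational. Also, your assertion that $\rank\xi'=\rank\xi$, used to conclude $D_p\p(\xi'_p)=\xi_{\p(p)}$ and hence that $\p^{-1}$ is distributional, is not among the stated hypotheses; the paper's proof makes the same tacit assumption when it declares $\p^{-1}|_{\p(V')}$ to be a distributional map, and it is automatic in the intended application to Darboux charts between contact manifolds, but for a general open distributional embedding it is a genuine extra assumption, so you are no worse off than the paper here and it is good that you made the needed equality explicit.
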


%old version of lemma but not oldest: Let $(M',\xi',g')$ and $(M,\xi,g)$ be sub-Riemannian manifolds. Let $p\in U'\subset M'$ be an open neighborhood of a point $p$ in $M'$. For any open distributional embedding $\p:(U',\xi'|_{U'})\hookrightarrow(M,\xi)$, there exists an open neighborhood $p\in V\subset U'$ such that 
%\[
%\p|_{V}:(V,\xi'|_{V})\hookrightarrow(M,\xi)
%\]
%is biLipschitz. 

% Old version of Lemma:
%\begin{lemma}\label{contactoembeddings are locally bilipschitz}
%Let $(M',\xi',g')$ and $(M,\xi,g)$ be sub-Riemannian manifolds and $U'\subset M'$ be open. Take a point $p\in U'$ and a radius $R>0$ such that the closed ball $\closure{\bcc{M'}(p,R)}$ is compact and is contained in $U'$. % since the d_{cc} metric on M’ induces its standard underlying topology, which is locally compact and Hausdorff and therefore “normal”, for any p\in U there always exists such an R > 0 .
% For any open distributional embedding $\p:(U',\xi'|_{U'})\hookrightarrow(M,\xi)$, there exists a neighborhood $p\in V\subset \closure{\bcc{M'}(p,R)}$ such that 
%\[
%\p|_{V}:(V,\xi'|_{V})\hookrightarrow(M,\xi)
%\]
%is biLipschitz. 
%\end{lemma}

\begin{proof}
Let $p\in M'$ be a point in $M'$. By Lemma~\ref{contactoembeddings are locally Lipscitz}, there exists an open neighborhood $p\in V'\subset M'$ such that the restriction $\p|_{V'}$ is Lipschitz.

Now, since the map $\p$ is invertible on its image, 
\[
\p^{-1}|_{\p(V')}:(\p(V'),\xi|_{\p(V')})\hookrightarrow (V',\xi|_{V'})
\]
is also a distributional map. 
%Furthermore, the point $\p(p)$ is contained in the image of the closed ball of radius $R$.
%%As the map $\p$ is a diffeomorphism onto its image, the subset $\p\left(\closure{\bcc{M'}(p,R)}\right)$ is compact in $M$. 
%Also, the image $\p\left(V'\right)$ is contained in the compact set $\p\left(\closure{\bcc{M'}(p,R)}\right)$ and is an open neighborhood of the point $\p(p)$:
%\[
%\p(p)\in \p(V')\subset \p\left(\closure{\bcc{M'}(p,R)}\right).
%\]
%Thus, there is a positive radius $R'>0$ such that the open ball $\bcc{M}(\p(p),R')$ is contained in the open subset $\p\left(V'\right)$. Additionally, the closed ball $\closure{\bcc{M}(\p(p),R')}$ is compact.
%% as it is contained in the compact subset $\p\left(\closure{\bcc{M'}(p,R)}\right)$.
Again by Lemma~\ref{contactoembeddings are locally Lipscitz}, there exists an open neighborhood $\p(p)\in U\subset\p(V')$ on which $\p^{-1}$ is Lipschitz. Then, $V=\p^{-1}(U)\subset V'$ is an open subset on which $\p|_{V}$ is Lipschitz and invertible. Since the inverse is also Lipschitz, 
\[
\p|_{V}:(V,\dcc{M'})\stackrel{\cong}{\longrightarrow}(\p(V),\dcc{M})
\]
is biLipschitz.

\end{proof}

With this more general result established, a biLipschitz Darboux Theorem is an immediate corollary. 

\begin{corollary}[BiLipschitz Theorem of Darboux]\label{biLipschitz Darboux} 
Let $(M,\xi)$ be a contact $(2n+1)$-manifold. For every $p\in M$, 
there exists an locally biLipschitz open distributional embedding 
\[
\p:\H^n\hookrightarrow(M,\dcc{M})
\]
such that $\p(0)=p$. 
%removed from above: an open neighborhood $0 \in V \subset \H^n$ and
\end{corollary}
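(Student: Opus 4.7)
The plan is simply to combine the classical Theorem of Darboux with the general biLipschitz result for distributional embeddings that was just established. All the real work has already been done in Lemma~\ref{contactoembeddings are locally bilipschitz}; here one only needs to recognize the setup.

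First, I would apply the Theorem of Darboux (Theorem~\ref{darboux}) to the contact $(2n+1)$-manifold $(M,\xi)$ at the given point $p$. This produces an open distributional embedding $\p:\H^n\hookrightarrow(M,\xi)$ of smooth manifolds with distribution such that $\p(0)=p$. Since contact manifolds come equipped with a sub-Riemannian metric by our standing convention, and since $\H^n$ carries its standard sub-Riemannian metric from Example~\ref{H^n sub}, both the domain and codomain of $\p$ are sub-Riemannian manifolds in the sense required by the subsequent results.

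Next, I would invoke Lemma~\ref{contactoembeddings are locally bilipschitz}, which asserts precisely that any open distributional embedding between sub-Riemannian manifolds is locally biLipschitz with respect to the associated \cc metrics. Applying this lemma to $\p$ gives the desired locally biLipschitz open distributional embedding
\[
\p:(\H^n,\dcc{\H^n})\hookrightarrow(M,\dcc{M}),
\]
still satisfying $\p(0)=p$.

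There is no real obstacle to overcome at this stage, since the hard work (controlling lengths under distributional maps via Lemma~\ref{bounds on lengths of paths}, and handling the non-geodetic-convexity of open subsets via Lemma~\ref{geodesic hull subset}) was already packaged into Lemma~\ref{contactoembeddings are locally bilipschitz}. The only thing worth emphasizing is that the biLipschitz conclusion is genuinely local: the Theorem of Darboux produces a global distributional embedding of $\H^n$, but its biLipschitz constants need not be uniform on all of $\H^n$, which is why the conclusion of the corollary is phrased as \emph{locally} biLipschitz rather than biLipschitz.
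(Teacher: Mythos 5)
Your proposal is correct and matches the paper's own proof essentially verbatim: apply the Theorem of Darboux (Theorem~\ref{darboux}) to obtain the open distributional embedding $\p:\H^n\hookrightarrow(M,\xi)$ with $\p(0)=p$, then invoke Lemma~\ref{contactoembeddings are locally bilipschitz} to conclude it is locally biLipschitz with respect to the \cc metrics. Your closing remark about why the conclusion is only \emph{locally} biLipschitz is a sensible observation, though not needed for the argument.
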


\begin{proof}%[Proof of Corollary~\ref{biLipschitz Darboux}]
By the Theorem of Darboux (Theorem~\ref{darboux}), for any point $p\in M$, there exists an open distributional embedding of $\H^n$, 
\[
\p:\H^n\hookrightarrow(M,\xi)
\]
where $\p(0)=p$. By Lemma~\ref{contactoembeddings are locally bilipschitz}, the embedding $\p$ is locally biLipschitz.
%there is an open neighborhood $p\in V\subset\H^n$ such that the restriction $\p|_V$ is biLipschitz.
\end{proof}

For such a biLipschitz map $\p$ guaranteed by Corollary~\ref{biLipschitz Darboux}, there is an open neighborhood $p\in V\subset\H^n$ such that the restriction $\p|_V$ is biLipschitz. Such a neighborhood $\p(V)$, along with the associated biLipschitz distributional embedding, will be referred to as a \emph{biLipschitz Darboux neighborhood}.

% Removed from previous paragraph: Since $\H^n$ is complete, any closed ball is compact.

%1/21 Possible Edit: As it stands, V is a subset of M', which goes against the naming convention of prime stuff within M' and non-prime stuff in M. Consider revising, but know that that means changing the notation in the argument that contact 3-manifolds are purely 2-unrectifiable.

\subsection{Unrectifiability of contact manifolds.}

\begin{convention}
Here and going forward, $\R^k$, and any subset thereof, is endowed with the Lebesgue measure $\mathcal{L}^k$ and the Euclidean metric $d^\delta$ unless otherwise mentioned.
\end{convention}

\begin{definition}
Let $k\geq 1$ be a positive integer. A metric space $X$ is \emph{purely $k$-unrectifiable}  if, for all Borel sets $A\subset\R^k$ and all Lipschitz maps $f:A\longrightarrow X$, the $k$-dimensional Hausdorff measure of the image vanishes: 
\[
\hm{k}(\Ima(f))=0.
\]
\end{definition}

Informally, a space is purely $k$-unrectifiable if Lipschitz maps from $k$-dimensi-\\ onal Euclidean space into the space cannot sweep out any of the $k$-dimensional Hausdorff measure. 

%As will be shown, contact 3-manifolds are purely 2-unrectifiable since they are locally-modeled by the purely 2-unrectifiable space $\H^1$.

%So the image of a Lipschitz homotopy between paths will have Hausdorff 2-measure zero. This essentially will say that Lipschitz homotopies between paths only occur if the paths bound no area.

The following theorem was first proven by Ambrosio and Kirchheim in the case $n=1$ \cite[Theorem 7.2]{Amb}. A more general result implying this theorem was shown by Magnani \cite{Mag04}. Also see \cite[Theorem 1.1]{Wildrick}. %Theorem 1.1 or 1.2 in Mag04. Theorem 1.1 in Wildrick.

\begin{theorem}\label{heisenberg_purely_2}
The $n$th Heisenberg group $\H^n$ is purely $k$-unrectifiable for $k>n$.
\end{theorem}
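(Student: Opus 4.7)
The plan is to follow the Pansu differentiation approach, reducing the unrectifiability statement to a Lie-algebraic constraint on Carnot group homomorphisms, and then to invoke a metric area formula.

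First, I would appeal to Pansu's Rademacher-type theorem applied to Lipschitz maps $f : A \to \H^n$ with $A \subset \R^k$ Borel. At $\mathcal{L}^k$-a.e.\ point $x_0 \in A$ there exists a Pansu differential $Df_{x_0} : \R^k \to \H^n$, which is a continuous homomorphism of stratified Lie groups intertwining the \cc dilations. Viewing $\R^k$ as the abelian Carnot group, the induced Lie algebra homomorphism must respect the stratification, and therefore sends $\R^k$ into the horizontal first layer $V_1 \subset \mathfrak{h}^n$.

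Second, I would exploit the non-commutativity of $\H^n$ to bound the rank of this derivative. Since the source $\R^k$ is abelian, the image of the Lie algebra map must be an abelian subspace of $V_1$, that is, an isotropic subspace with respect to the bracket $[\cdot,\cdot] : V_1 \times V_1 \to V_2$. In $\mathfrak{h}^n$ this bracket is, up to scaling, the standard symplectic form on $V_1 \cong \R^{2n}$, so its isotropic subspaces have dimension at most $n$. Consequently $\rank(Df_{x_0}) \leq n$ for a.e.\ $x_0 \in A$.

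Finally, I would conclude via the metric area formula of Ambrosio--Kirchheim: the rank bound yields that the generalized Jacobian of $f$ vanishes $\mathcal{L}^k$-a.e.\ on $A$ whenever $k > n$, and the area formula then gives $\hm^k(f(A)) = 0$, as required. The main obstacle I foresee is precisely this last step: both the existence of a Pansu differential and the validity of the area formula in the sub-Riemannian setting rest on the full metric differentiation theory developed in \cite{Amb} and its sharpening in \cite{Mag04}. In practice I would simply cite these results rather than reproduce them, treating the present theorem as an immediate consequence of Magnani's work applied to $\H^n$.
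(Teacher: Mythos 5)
Your outline is correct and matches the approach behind the results the paper itself relies on: the paper gives no proof of this theorem, citing Ambrosio--Kirchheim \cite{Amb} for $n=1$ and Magnani \cite{Mag04} in general, and your Pansu-differentiation, isotropic-rank-bound, and area-formula sketch is precisely the strategy of those references. The only caveat is that Pansu differentiability must be invoked in its version for Lipschitz maps defined on measurable (not open) subsets of $\R^k$ --- Lipschitz extension to all of $\R^k$ is unavailable when $k>n$ --- which is exactly what Magnani's work supplies, so deferring to \cite{Mag04} as you propose is appropriate.
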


%We are now ready to prove the main result of this chapter. 
As contact $(2n+1)$-manifolds as metric spaces are locally modeled by $\H^n$, such spaces are the union of biLipschitz Darboux neighborhoods.  Since $\H^n$ is purely $k$-unrectifiable for $k>n$, we will show that the union of the biLipschitz Darboux neighborhoods is as well.

\begin{theorem}\label{contact 3-mflds purely unrectifiable}
Any contact $(2n+1)$-manifold $(M,\xi)$, endowed with the \cc metric, is purely $k$-unrectifiable for $k>n$. 
\end{theorem}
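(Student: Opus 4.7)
The plan is a local-to-global argument combining Corollary~\ref{biLipschitz Darboux} with the pure $k$-unrectifiability of $\H^n$ (Theorem~\ref{heisenberg_purely_2}) and the countable subadditivity of Hausdorff measure. Because $M$ is a smooth connected manifold, it is paracompact and hence second countable. By Corollary~\ref{biLipschitz Darboux}, every point of $M$ lies in some biLipschitz Darboux neighborhood, so we may extract a countable cover $\{\p_i(V_i)\}_{i\in\NN}$ of $M$ in which each restriction $\p_i|_{V_i}\colon V_i\to\p_i(V_i)$ is biLipschitz with respect to $\dcc{\H^n}$ and $\dcc{M}|_{\p_i(V_i)}$.

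Given a Borel set $A\subset\R^k$ and a Lipschitz map $f\colon A\to(M,\dcc{M})$, let $A_i\deq f^{-1}(\p_i(V_i))$. Since $f$ is continuous, each $A_i$ is open in $A$, hence Borel in $\R^k$. Then $\Ima(f)=\bigcup_{i\in\NN}f(A_i)$, so by countable subadditivity of $\hm{k}$ it suffices to show that $\hm{k}(f(A_i))=0$ for every $i$. For each $i$, the composition
\[
\p_i^{-1}\circ f|_{A_i}\colon A_i\longrightarrow V_i\subset\H^n
\]
is a Lipschitz map from a Borel subset of $\R^k$ into $\H^n$ (as the composition of the Lipschitz restriction $f|_{A_i}$ with the Lipschitz inverse $\p_i^{-1}$ on $\p_i(V_i)$). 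Since $k>n$, Theorem~\ref{heisenberg_purely_2} gives
\[
\hm{k}\bigl(\p_i^{-1}(f(A_i))\bigr)=0.
\]

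Finally, pushing this null set forward by the biLipschitz map $\p_i|_{V_i}$ scales $\hm{k}$ by at most its biLipschitz constant to the $k$-th power, so $\hm{k}(f(A_i))=0$, where the Hausdorff measure may be computed equivalently with respect to $\dcc{M}|_{\p_i(V_i)}$ or to $\dcc{M}$, since these subspace diameters agree. Summing over the countable cover yields $\hm{k}(\Ima(f))=0$, establishing that $(M,\dcc{M})$ is purely $k$-unrectifiable. The only real subtlety lies in the second-countability step, which is needed to reduce to a countable cover before invoking countable subadditivity of $\hm{k}$; the biLipschitz invariance of $\hm{k}$-nullity and the inheritance of pure unrectifiability by subsets are standard.
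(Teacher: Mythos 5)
Your proposal is correct and follows essentially the same route as the paper: a countable cover by biLipschitz Darboux neighborhoods, pulling $f$ back through $\p_i^{-1}$ to invoke the pure $k$-unrectifiability of $\H^n$, pushing the null set forward by the Lipschitz map $\p_i$, and concluding by countable subadditivity of $\hm{k}$. The only cosmetic differences are that you spell out second countability and the $L^k$ scaling of Hausdorff measure, where the paper simply cites that $M$ is a manifold and that Lipschitz images of $\hm{k}$-null sets are $\hm{k}$-null.
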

%{\color{blue} This can be extended from case where $k=2$ to any $k\geq2$}

\begin{proof}

Fix a positive integer $k>n$. Construct a cover of $(M,\xi)$ by biLipschitz Darboux neighborhoods. By Corollary~\ref{biLipschitz Darboux}, each point in $(M,\xi)$ has a biLipschitz Darboux neighborhood. $M$ can be covered by such neighborhoods and, since $M$ is a manifold, it can be reduced to a countable cover. 

Let $\{\p_\alpha:(V_\alpha,\dcc{\H^n})\hookrightarrow(M,\dcc{M})\}_{\alpha\in J}$ denote a countable collection of biLipschitz open distributional embeddings where $V_\alpha\subset\H^n$ is open for each $\alpha\in J$, such that $\{\p_\alpha(V_\alpha)\}_{\alpha\in J}$ is a countable cover of $M$.

Let $f:(A,d^\delta)\longrightarrow (M,\dcc{M})$ be a Lipschitz map whose domain $A\subset\R^k$ is a Borel set. To verify that $(M,\dcc{M})$ is purely $k$-unrectifiable, it is enough to show that $\hm{k}(\Ima f)=0$.

Fix an $\alpha\in J$ and consider $f$ restricted to the relatively open subset $f^{-1}(\p_\alpha(V_\alpha))\subset A$. By Corollary~\ref{biLipschitz Darboux}, $\p_\alpha^{-1}:(\p_\alpha(V_\alpha),\dcc{M})\longrightarrow (V_\alpha,\dcc{\H^n})$ is a Lipschitz map. As $f|_{f^{-1}(\p_\alpha(V_\alpha))}$ maps into $\p_\alpha(V_\alpha)$, 
\[
\p^{-1}_\alpha\circ f|_{f^{-1}(\p_\alpha(V_\alpha))}:(f^{-1}(\p_\alpha(V_\alpha)),d^\delta)\longrightarrow (V_\alpha, \dcc{\H^n})
\]
is defined and is Lipschitz;
\begin{center}
\begin{tikzcd}
A\arrow[rr, "f"] && M \\ \\

&& \p_\alpha(V_\alpha) \arrow[uu, hookrightarrow] \\ \\

f^{-1}(\p_\alpha(V_\alpha))\arrow[uuuu,hookrightarrow] \arrow[rr, "\p_\alpha^{-1}\circ f|"]\arrow[rruu, "f|_{f^{-1}(\p_\alpha(V_\alpha))}" near end] && V_\alpha. \arrow[uu, "\cong", "\p_\alpha"']
\end{tikzcd}
\end{center}
As $\p_\alpha(V_\alpha)\subset M$ is open and $f$ is continuous, $f^{-1}(\p_\alpha(V_\alpha))\subset A$ is an open subset of a Borel set and is thus Borel. 

Since $\H^n$ is purely $k$-unrectifiable (Theorem~\ref{heisenberg_purely_2}), 
\[
\hm{k}(\Ima(\p^{-1}_\alpha\circ f|_{f^{-1}(\p_\alpha(V_\alpha))}))=0.
\]
As $\p_\alpha$ is Lipschitz and the Lipschitz image of a $\hm{k}$-measure zero set is a $\hm{k}$-measure zero set,
\[
\hm{k}(\Ima(f|_{f^{-1}(\p_\alpha(V_\alpha))}))=\hm{k}(\p_\alpha(\Ima(\p^{-1}_\alpha\circ f|_{f^{-1}(\p_\alpha(V_\alpha))})))=0.
\]

% Used above: Lemma~\ref{lipschitz image of measure zero}

Now, note that $\Ima f=\ds\bigcup_{\alpha\in J}\Ima(f|_{f^{-1}(\p_\alpha(V_\alpha))})$. By subadditivity of the outer measure $\hm{k}$,
\[
0\leq\hm{k}(\Ima f)\leq\sum_{\alpha\in J}\hm{k}(\Ima(f|_{f^{-1}(\p_\alpha(V_\alpha))})).
\]
Since $\alpha\in J$ above was arbitrary, the right hand side of the inequality is zero and $\hm{k}(\Ima f)=0$.
\end{proof}

%%\newpage
%
%\begin{thebibliography}{3}
%
%\bibitem{Amb} Ambrosio, Luigi, and Bernd Kirchheim. ``Rectifiable sets in metric and Banach spaces.'' \textit{Mathematische Annalen} 318, no. 3 (2000): 527-555.
%
%
%
%\bibitem{Dej} Dejarnette, Noel, Piotr Haj\l{}asz, Anton Lukyanenko, and Jeremy Tyson. ``On the lack of density of Lipschitz mappings in Sobolev spaces with Heisenberg target." \textit{Conformal Geometry and Dynamics of the American Mathematical Society} 18, no. 8 (2014): 119-156.
%
%\bibitem{Falconer} Falconer, Kenneth. \textit{Fractal geometry: mathematical foundations and applications}. John Wiley \& Sons, 2004.
%%
%%
%\bibitem{Gei} Geiges, Hansj\"org. ``Contact geometry." In \textit{Handbook of differential geometry}, vol. 2, pp. 315-382. North-Holland, 2006.
%%
%%\bibitem{Lee} Lee, John M. \textit{Introduction to Smooth Manifolds}, Springer, New York, NY, 2003.
%%
%\bibitem{Mon} Montgomery, Richard. \textit{A tour of subriemannian geometries, their geodesics and applications}. No. 91. American Mathematical Soc., 2006.
%
%\bibitem{Monti} Monti, Roberto, and Matthieu Rickly. ``Geodetically convex sets in the Heisenberg group.'' \textit{J. Convex Anal} 12, no. 1 (2005): 187-196.
%
%%
%%
%\bibitem{Weg} Wenger, Stefan, and Robert Young. ``Lipschitz homotopy groups of the Heisenberg groups." \textit{Geometric and Functional Analysis} 24, no. 1 (2014): 387-402.
%
%
%
%
%\end{thebibliography}

\nopagebreak
\section{Lipschitz homotopy groups of purely 2-unrectifiable sub-Riemannian manifolds.}

\subsection{Lipschitz homotopy groups.}

Having endowed Carnot manifolds with a \cc metric, we report the probing of the metric structure by Lipschitz maps via Lipschitz homotopy groups.  Going forward, let $I=[0,1]$ denote the unit interval.

\begin{definition}\label{lip homotopy groups}
Let $s_0\in\S^n$ be a base point for the $n$-sphere. For a based metric space $(X,d)$ with basepoint $x_0\in X$, the \emph{$n$th Lipschitz homotopy group} is
\[
\pilip{n}((X,d),x_0)\deq \lipmap_{x_0}(\S^n,X)/\sim,
\]
where two based Lipschitz maps in $f_0,f_1\in\lipmap_{x_0}(\S^n,X)$ are equivalent, $f_0\sim f_1$, if there exists a Lipschitz homotopy $H\in\lipmap(I\times\S^n,X)$ such that 
\begin{eqnarray*}
H|_{\{0\}\times\S^n}=f_0, & & \\
H|_{\{1\}\times\S^n}=f_1, &\text{ and }& \\
H|_{I\times\{s_0\}}=x_0. & &
\end{eqnarray*}
\end{definition}

This definition agrees with the definition of Lipschitz homotopy groups provided in Definition 4.1 of \cite{Dej}. Provided that $X$ is a Riemannian manifold with the associated path metric, the Lipschitz homotopy groups of $X$ agree with the classical homotopy groups (Theorem 4.3 in \cite{Dej}).

The base point will often be suppressed when it is not of utmost importance. In fact for any sub-Riemannian manifold, by Chow-Rashevskii theorem, the $n$th Lipschitz homotopy group is the same no matter the choice of base point. See Theorem 4.2~(2) in \cite{Dej}.

\subsection{A distributional open embedding induces an injective map on $\pilip{1}$.}

%Consider rename

% Part of Old Intro to the chapter on Higher Lipschitz homotopy groups of contact 3-manifolds
%
%The aim of this section is to calculate the higher Lipschitz homotopy groups of any contact 3-manifold, showing that these groups are all trivial. This result has already been achieved in the case where the contact 3-manifold in question is the first Heisenberg group, $\H^1$ \cite{Weg}. In fact, they proved a more general result, essentially showing that any purely 2-unrectifiable metric space has trivial higher Lipschitz homotopy groups.

In \cite{Weg}, Wenger and Young showed that certain Lipschitz maps into a purely 2-unrectifiable space factor through metric trees. 

%(\textbf{Wenger and Young, Theorem 5 in \cite{Weg}})
\begin{theorem}[Theorem 5 in \cite{Weg}]\label{Wenger and Young}
Let $X$ be a quasi-convex metric space with $\pilip{1}(X)=0$. Let furthermore $Y$ be a purely 2-unrectifiable metric space. Then every Lipschitz map from $X$ to $Y$ factors through a metric tree.
\end{theorem}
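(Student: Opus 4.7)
The plan is to construct a metric tree $T$ together with Lipschitz maps $g \colon X \to T$ and $h \colon T \to Y$ such that $f = h \circ g$, realizing $T$ as an isometric quotient of $X$ under a pseudometric that records only the ``width'' of $f$ along paths. Define $\rho \colon X \times X \to [0, \infty)$ by
\[
\rho(x, x') \,:=\, \inf \Bigl\{ \operatorname{diam}(f \circ \gamma) \,:\, \gamma \in \lipmap([0,1], X),\ \gamma(0) = x,\ \gamma(1) = x' \Bigr\}.
\]
Quasi-convexity of $X$ provides Lipschitz paths of length at most $C\, d_X(x, x')$, giving $\rho(x, x') \leq C\, \operatorname{Lip}(f)\, d_X(x, x')$; symmetry is clear, and concatenation of paths yields the triangle inequality, so $\rho$ is a pseudometric. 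Setting $T := X / {\sim}$ with $x \sim x'$ iff $\rho(x, x') = 0$ and equipping $T$ with the induced metric, the quotient $g \colon X \to T$ is Lipschitz. Since $d_Y(f(x), f(x')) \leq \operatorname{diam}(f \circ \gamma)$ for any joining path $\gamma$, one obtains $d_Y(f(x), f(x')) \leq \rho(x, x')$, and hence $f$ descends to a $1$-Lipschitz map $h \colon T \to Y$ with $f = h \circ g$.

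The heart of the argument is showing that $(T, d_T)$ is a metric tree. By Gromov's four-point characterization of $0$-hyperbolicity, this reduces to verifying that for all $a, b, c, d \in X$,
\[
\rho(a,b) + \rho(c,d) \,\leq\, \max\bigl\{\rho(a, c) + \rho(b, d),\; \rho(a, d) + \rho(b, c)\bigr\},
\]
together with geodesicity of $T$ (which will follow from $\rho$ being defined as an infimum over paths, after passing to the length pseudometric and completing if necessary). To establish the four-point inequality, fix near-optimal paths realizing the $\rho$-distances on the right-hand side. Using $\pilip{1}(X) = 0$ together with quasi-convexity, fill the resulting quadrilateral by a Lipschitz map $u \colon D^2 \to X$ whose boundary, traversed in order, runs along these four paths. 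The composition $f \circ u \colon D^2 \to Y$ is then a Lipschitz disk into the purely $2$-unrectifiable space $Y$, so its image has vanishing $\hm{2}$-measure. I would then invoke the disk-factorization principle: a Lipschitz disk whose image has $\hm{2}$-measure zero factors, up to arbitrarily small error, through a metric tree. On such a tree the four boundary vertices automatically satisfy the four-point inequality, and transporting this back yields the required inequality for the $\rho$-distances of $a, b, c, d$.

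The hard part will be bridging the measure-theoretic hypothesis of pure $2$-unrectifiability with the metric-topological conclusion that a Lipschitz disk factors through a tree. I expect to handle this by approximating $f \circ u$ by piecewise-linear maps on successively finer triangulations of $D^2$, estimating the area contributed by each triangle via a slicing or Ambrosio--Kirchheim metric-current argument, and passing to a Gromov--Hausdorff limit of the resulting quotient trees. Once this disk-factorization lemma is in hand, the remainder of the argument---setting up $\rho$, producing the quotient $T$, verifying functoriality---is formal.
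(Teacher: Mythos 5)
This statement is not proved in the paper at all: it is imported verbatim as Theorem 5 of Wenger--Young \cite{Weg} and used as a black box, so there is no internal argument to compare yours against; your proposal is an attempt to reprove the cited result. Judged on its own terms, it has a genuine gap at its central step. The hypothesis that $Y$ is purely 2-unrectifiable enters your argument only through the asserted ``disk-factorization principle'': that a Lipschitz map $f\circ u:\D^2\to Y$ whose image is $\hm{2}$-null factors, up to arbitrarily small error, through a metric tree. But that assertion is precisely the theorem you are trying to prove in the special case $X=\D^2$ (the disk is quasi-convex with $\pilip{1}(\D^2)=0$, and pure 2-unrectifiability of $Y$ is exactly what makes every such image $\hm{2}$-null), so the argument is circular: the entire analytic content of Wenger--Young's theorem is deferred to this unproved principle, and the paragraph offered in its support (piecewise-linear approximation, slicing or Ambrosio--Kirchheim currents, Gromov--Hausdorff limits of quotient trees) is a list of tools rather than an argument. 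It is also not clear that the principle is even correctly isolated: having a \emph{single} Lipschitz disk with null image is a much weaker hypothesis than mapping into a purely 2-unrectifiable \emph{target}, and any honest proof would likely need the latter.

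Even granting that principle, two further steps are not established. First, Gromov's four-point condition characterizes $0$-hyperbolicity, but to conclude that $(T,\rho)$ is a metric tree you must also show it is geodesic (or at least a length space); the infimum defining $\rho$ need not be attained, the quotient metric need not be intrinsic, and ``passing to the length pseudometric and completing'' changes the metric, after which you must recheck that $h$ is still well defined and Lipschitz and that the four-point inequality survives. Second, transporting the four-point inequality back from the approximate tree factorization of the filled quadrilateral to the quantities $\rho(a,b)$, $\rho(c,d)$ is not automatic: $\rho$ is an infimum of diameters over \emph{all} Lipschitz paths in $X$, whereas the tree produced by the filling only sees the particular disk $u$ and the metric of $Y$, and no estimate is given relating distances in that tree to the $\rho$-distances on the left-hand side. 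As it stands, the proposal reorganizes the problem but does not close it.
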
 %Consider defining quasi-convex earlier

Since metric trees are Lipschitz contractible, any Lipschitz map with appropriate domain and purely 2-unrectifiable target is Lipschitz null-homotopic. For example, Corollary~\ref{W_Y Corollary} covers the case that the domain is an $n$-sphere with $n\geq 2$. This result is stated in \cite{Weg} as a corollary to Theorem~\ref{Wenger and Young}. Theorem~\ref{lip_htpy_grps_contact_3_mflds} (2) then follows immediately.

%The $n$-sphere $X=\S^n$ with the standard Riemannian metric is quasi-convex and simply connected for $n\geq2$. So, all Lipschitz mappings from $\S^n$ to a purely 2-unrectifiable space factor through a metric tree, which is a Lipschitz contractible space. Thus, all such mappings are Lipschitz null-homotopic and the higher Lipschitz homotopy groups are trivial. As such, to establish the desired calculation, it is then enough to show that a contact 3-manifold, endowed with a sub-Riemannian structure, is purely 2-unrectifiable.

\begin{corollary}[\cite{Weg}]\label{W_Y Corollary}
Let $Y$ be a purely 2-unrectifiable metric space. If $n\geq2$ and $\alpha:\S^n\longrightarrow Y$ is a Lipschitz map, then $\alpha$ is Lipschitz null-homotopic. That is, $\pilip{n}(Y)=0$.
\end{corollary}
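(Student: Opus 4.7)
The plan is to apply Theorem~\ref{Wenger and Young} with $X = \S^n$ and $Y$ as given, and then exploit the Lipschitz contractibility of metric trees. First I would verify that the $n$-sphere $\S^n$ (endowed with its standard round Riemannian path metric) meets the hypotheses of Theorem~\ref{Wenger and Young}: it is compact and a Riemannian manifold, hence quasi-convex (in fact geodesic), and for $n \geq 2$ it is simply connected as a smooth manifold. Since $\S^n$ is a Riemannian manifold, its Lipschitz homotopy groups agree with its classical homotopy groups by the remark following Definition~\ref{lip homotopy groups} (i.e.\ Theorem 4.3 in \cite{Dej}), so $\pilip{1}(\S^n) = \pi_1(\S^n) = 0$ for $n \geq 2$.

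With these hypotheses in place, Theorem~\ref{Wenger and Young} produces a metric tree $T$ together with Lipschitz maps $f:\S^n \longrightarrow T$ and $g:T\longrightarrow Y$ such that $\alpha = g \circ f$. The next step is to exhibit a Lipschitz null-homotopy of $f$ inside $T$. Here I would invoke (or briefly justify) the standard fact that any metric tree is Lipschitz contractible: after choosing a base point $t_0 \in T$ (say $t_0 = f(s_0)$), the map $H_T : I \times T \longrightarrow T$ sending $(s,t)$ to the unique point on the geodesic segment from $t_0$ to $t$ at parameter $1-s$ is $1$-Lipschitz in each variable and jointly Lipschitz, contracting $T$ to $t_0$. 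Composing $H_T$ with $(\mathrm{id}_I \times f)$ yields a Lipschitz homotopy from $f$ to the constant map at $t_0$ fixing the base point.

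Finally, pushing this homotopy forward through $g$ gives a Lipschitz map
\[
H : I \times \S^n \longrightarrow Y, \qquad H(s,x) = g\bigl(H_T(s, f(x))\bigr),
\]
which is Lipschitz as a composition of Lipschitz maps, satisfies $H(0,\cdot) = g\circ f = \alpha$, $H(1,\cdot) = g(t_0) = \alpha(s_0)$, and $H(s,s_0) = g(t_0)$ for all $s$. Thus $\alpha$ is Lipschitz null-homotopic through based maps, proving $\pilip{n}(Y) = 0$.

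The only substantive obstacle is the Lipschitz contractibility of an arbitrary metric tree, since a tree may be unbounded or wild; however, because $f(\S^n)$ is compact (being a Lipschitz image of a compact space), one may freely restrict attention to the closed convex hull of $f(\S^n)$ in $T$, which is a bounded subtree on which the straight-line-toward-basepoint contraction is globally Lipschitz. Everything else is routine composition of Lipschitz maps.
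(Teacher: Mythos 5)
Your proposal is correct and follows essentially the same route as the paper: verify that $\S^n$ ($n\geq 2$) is quasi-convex and Lipschitz simply connected, factor $\alpha$ through a metric tree via Theorem~\ref{Wenger and Young}, and compose a Lipschitz contraction of the tree with the factoring maps to obtain the null-homotopy. Your extra remark on restricting to the (bounded) subtree containing the compact image to ensure the contraction is globally Lipschitz is a reasonable refinement of a point the paper leaves implicit.
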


\begin{proof}
%This follows directly from Theorem 5 in \cite{Weg}. 
The $n$-sphere $\S^n$, with its standard Riemannian metric, is quasi-convex and is Lipschitz simply connected. 
%By Theorem~\ref{contact 3-mflds purely unrectifiable}, the contact 3-manifold $(M,\xi)$ is purely 2-unrectifiable. 
Thus, by Theorem~\ref{Wenger and Young}, the Lipschitz map $\alpha$ factors through a metric tree $T$,
\begin{center}
\begin{tikzcd}
\S^n\arrow[rr, "\alpha"] \arrow[dr, "\psi"'] && Y.  \\ 

& T \arrow[ur, "\phi"']&
\end{tikzcd}
\end{center}
The maps $\psi:\S^n\longrightarrow T$ and $\phi:T\longrightarrow Y$ are Lipschitz as well. 

% This is used above: The n-sphere is simply connected and it follows that $\pilip{1}(\S^n)=0$ (See Example~\ref{lip sphere})

Since $T$ is a metric tree, $T$ is contractible by a Lipschitz homotopy $h:I\times T\longrightarrow T$. Therefore, the homotopy  $H:I\times\S^n\longrightarrow Y$ given by $H(p,t)\deq\phi(h(\psi(p),t))$ is a Lipschitz null-homotopy of the map $\alpha$.

% Used above: Proposition~\ref{metric tree}

\end{proof}

\begin{proof}[Proof of Theorem~\ref{lip_htpy_grps_contact_3_mflds} (2)] %-\\
Since the contact 3-manifold is purely 2-unrectifiable (Theorem~\ref{contact 3-mflds purely unrectifiable}), the result follows immediately from Corollary~\ref{W_Y Corollary}.
\end{proof}

{ %\color{purple}

In the remainder of this paper, we will apply Theorem~\ref{Wenger and Young} to argue that an open distributional embedding of a purely 2-unrectifiable sub-Riemannian manifold into another induces an injective homomorphism between the respective first Lipschitz homotopy groups. 
%This will follow from $(M,\dcc{M})$ being purely 2-unrectifiable and Theorem~\ref{Wenger and Young}. Recall that this theorem states that any Lipschitz map from a quasi-convex, Lipschitz simply-connected metric space into a purely 2-unrectifiable space factors through a metric tree.
Parts (3) and (1) of Theorem~\ref{lip_htpy_grps_contact_3_mflds} will follow immediately. 

First, we will show that the Lipschitz null homotopy of a Lipschitz null homotopic loop can be taken such that the loop shrinks to a point along its image. 

Going forward, let $\D^{2}$ denote the unit ball in $\R^{2}$. Also, by \emph{subtree} we will mean a nonempty, connected, compact subset of a metric tree. A subtree is then a metric tree as well.

\begin{lemma}\label{homotopy of a disk takes image on boundary}
Let $Y$ be a purely 2-unrectifiable metric space. Let $H_0:\D^2\rightarrow Y$ be a Lipschitz map. Then, the map $H_0$ is Lipschitz homotopic to a Lipschitz map $H_1:\D^2\rightarrow Y$ such that the image of $H_1$ is contained in the image of $H_0$ restricted to the boundary of the 2-disk: $H_1\left(\D^2\right)\subset H_0\left(\partial\D^2\right)$. Furthermore, the homotopy is relative to the boundary of $\D^2$. 
\end{lemma}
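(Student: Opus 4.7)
The plan is to use Theorem~\ref{Wenger and Young} to factor $H_0$ through a metric tree, and then exploit the fact that in a metric tree any subtree is a Lipschitz deformation retract.

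First I would observe that $\D^2$, with its Euclidean metric, is convex and therefore quasi-convex with constant $1$, and that it is Lipschitz contractible via the straight-line homotopy to its center, so in particular $\pilip{1}(\D^2) = 0$. Hence Theorem~\ref{Wenger and Young} applies and produces a metric tree $T$ together with Lipschitz maps $\psi : \D^2 \to T$ and $\phi : T \to Y$ such that $H_0 = \phi \circ \psi$.

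Next, set $S \deq \psi(\partial \D^2)$. Since $\partial \D^2$ is connected and compact and $\psi$ is continuous, $S$ is a nonempty, connected, compact subset of $T$, i.e.\ a subtree in the sense of the paper. I would then invoke the standard fact that in a metric tree the nearest-point projection $\pi_S : T \to S$ onto a subtree is well-defined and $1$-Lipschitz, and that the geodesic straight-line retraction $\rho : [0,1] \times T \to T$, defined by letting $\rho(s, t)$ be the point on the unique geodesic segment from $t$ to $\pi_S(t)$ at fraction $s$ of the distance, is Lipschitz on $[0,1] \times \psi(\D^2)$. The key estimates are that $\rho(s, \cdot)$ is $1$-Lipschitz for each fixed $s$ and that $\rho(\cdot, t)$ is $d(t, \pi_S(t))$-Lipschitz, which together yield a joint Lipschitz bound on the compact set $\psi(\D^2)$. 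With these ingredients I would define $K : [0,1] \times \D^2 \to Y$ by $K(s, x) \deq \phi\bigl(\rho(s, \psi(x))\bigr)$ and set $H_1 \deq K(1, \cdot)$. Then $K(0, \cdot) = \phi \circ \psi = H_0$, and $H_1(\D^2) = \phi(\pi_S(\psi(\D^2))) \subset \phi(S) = H_0(\partial \D^2)$. For $x \in \partial \D^2$ one has $\psi(x) \in S$, so $\pi_S(\psi(x)) = \psi(x)$ and $\rho(s, \psi(x)) = \psi(x)$ for every $s$, giving $K(s, x) = H_0(x)$; thus $K$ is a homotopy relative to $\partial \D^2$. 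Being a composition of Lipschitz maps on a compact domain, $K$ is Lipschitz.

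The main obstacle is verifying that $\rho$ is jointly Lipschitz rather than just continuous. This reduces to the standard tree (or more generally CAT(0)) inequality $d(\rho(s, t_1), \rho(s, t_2)) \leq (1 - s)\,d(t_1, t_2) + s\,d(\pi_S(t_1), \pi_S(t_2))$, together with the $1$-Lipschitz property of $\pi_S$; both hold in any $\mathbb{R}$-tree. Everything else is bookkeeping. Note also that the argument never uses pure $2$-unrectifiability of $Y$ beyond what is already packaged into Theorem~\ref{Wenger and Young}; once the factorization through a tree is in hand, the rest is tree geometry.
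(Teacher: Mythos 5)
Your proposal follows essentially the same route as the paper: factor $H_0=\phi\circ\psi$ through a metric tree via Theorem~\ref{Wenger and Young}, observe that $\psi(\partial\D^2)$ is a subtree, Lipschitz deformation-retract the tree onto that subtree, and compose with $\phi$ to obtain the rel-boundary homotopy with image in $H_0(\partial\D^2)$. The only difference is that you construct the Lipschitz retraction explicitly (nearest-point projection plus convexity of the metric, with the joint Lipschitz bound coming from compactness of $\psi(\D^2)$), a step the paper simply asserts, so this is added detail rather than a different argument.
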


%You can improve subset here to equality. Just need to include something about the map \psi in Wenger and Young's result is onto. It will be onto as the tree is defined as a quotient space. Then all of the dashed arrows in the below diagram are onto. Only need subset for this paper though. 

\begin{proof}
Let $H_0:\D^2\longrightarrow Y$ be a Lipschitz map. By Theorem~\ref{Wenger and Young}, since $\D^2$ is quasi-convex and Lipschitz simply-connected, the map $H_0$ factors through a metric tree $T$:
\begin{center}
\begin{tikzcd}
\D^2 \arrow[rr, "H_0"] \arrow[rd, "\psi"'] & & Y. \\ 
& T \arrow[ru, "\phi"'] & 
\end{tikzcd}
\end{center}

Both maps $\psi$ and $\phi$ are Lipschitz. Since $\psi$ is then continuous, the image  $\psi(\partial\D^2)\subset T$ is connected and compact. So, $\psi(\partial\D^2)$ is a subtree of the metric tree $T$. Thus, there exists a Lipschitz deformation retract $F:I\times T\rightarrow T$ of the tree $T$ onto the subtree $\psi(\partial\D^2)$. %(See Lemma~6.0.19 in \cite{dissertation} for a proof).
%Removed from above: Lemma~\ref{def retract onto subtree}

Consider the Lipschitz map 
\[
\phi\circ F\circ (\id_I\times\psi):I\times\D^2\rightarrow Y.
\]
We will argue that this map is a homotopy between $H_0$ and 
\[
H_1:=\phi\circ F\circ (\id_I\times\psi)\circ(1\times\id_{\D^2}):\D^2\longrightarrow Y
\] 
satisfying the desired properties.

Since the map $F$ is a deformation retract of the metric  tree $T$, we have an equality of maps $F\circ (0\times\id_T)=\id_T$. Thus, precomposing $\phi\circ F\circ (\id_I\times\psi)$ by the natural inclusion of $\{0\}\times\D^2$ into $I\times\D^2$ yields the original map $H_0=\phi\circ\psi$. This equality is indicated in the filled diagram in Figure~\ref{diagram}.

\begin{center}
\begin{figure}[h]
\begin{tikzcd}
\{1\}\times\D^2 \arrow[dd, hookrightarrow] \arrow[r, dashed, "{1_I\times\psi}"] & \{1\}\times T \arrow[dd, hookrightarrow] \arrow[r, twoheadrightarrow, dashed] & {\psi(\partial\D^2)} \arrow[dd, hookrightarrow]\arrow[r, dashed, "{\phi|}"] & {\phi\circ\psi(\partial\D^2)}\arrow[dd, hookrightarrow]\arrow[r, equal] & {H_0\left(\partial\D^2\right)} \arrow[ddl, hookrightarrow] \\ \\
I\times \D^2 \arrow[r, "{\id_I\times\psi}"] & I\times T \arrow[r, "F"] & T \arrow[r, "\phi"]& Y. & \\ \\
\{0\}\times\D^2 \arrow[uu, hookrightarrow] \arrow[uurrr, bend right=20, "{H_0}"]
\end{tikzcd}
\caption{}
\label{diagram}
\end{figure}
\end{center}

We now argue that there are factorizations of the maps $\id_I \times\psi$, $F$, and $\phi$ as is indicated by the dashed arrows in Figure~\ref{diagram}. 

For the Lipschitz map $\id_I\times\psi:I\times\D^2\rightarrow I\times T$, precomposing by the natural inclusion of $\{1\}\times\D^2$ yields the Lipschitz map $1_I\times\psi:\{1\}\times\D^2\longrightarrow\{1\}\times T,$ where $1_I:\{1\}\rightarrow\{1\}$ is a constant map. Next, since the map $F$ is a deformation retract of the metric tree $T$ onto the subtree $\psi\left(\partial\D^2\right)$, we have that $F\circ (1\times\id_T):\{1\}\times T\twoheadrightarrow\psi\left(\partial\D^2\right)$ maps onto the subtree $\psi\left(\partial\D^2\right)$.The third dashed arrow comes from restricting the Lipschitz map $\phi$ to the subset $\psi(\partial\D^2)\subset T$. Finally, since $H_0$ factors into the composition $\phi\circ\psi$, when restricted to the boundary of the 2-disk there is an equality of sets $\phi\circ\psi\left(\partial\D^2\right)=H_0\left(\partial\D^2\right)$.

Therefore, precomposing the map $\phi\circ F\circ (\id_I\times\psi)$ by the natural inclusion of $\{1\}\times\D^2$ into $I\times\D^2$ yields a map $H_1$ that has image contained in $H_0\left(\partial\D^2\right)\subset Y$. Moreover, since the map $F$ is a deformation retract onto $\psi(\partial\D^2)$, the Lipschitz homotopy $\phi\circ F\circ(\id_I\times\psi)$ is constant on $\partial\D^2$ for all time $t\in I$.

\end{proof}

We now show that each open distributional embedding from a purely 2-unrectifiable sub-Riemannian manifold into another induces an injective map on their respective first Lipschitz homotopy groups. 

Before proceeding, note that an open distributional embedding $\varphi:(M,\xi)\hookrightarrow(M',\xi')$ between sub-Riemannian manifolds does induce a homomorphism between Lipschitz homotopy groups. Indeed, via Lemma~\ref{contactoembeddings are locally Lipscitz}, the map $\p$ is locally Lipschitz and thus, for any Lipschitz map $\alpha:\S^n\longrightarrow(M,\dcc{M})$, the map $\p\circ\alpha$ is Lipschitz since its domain is compact.

\begin{theorem}\label{inclusion yields injective map of homotopy groups} %-\\
Let $(M,\xi,g)$ and $(M',\xi',g')$ be purely 2-unrectifiable sub-Riema- nnian manifolds. Let $\varphi:(M,\xi)\hookrightarrow(M',\xi')$ be an open distributional embedding. Then the homomorphism induced by $\varphi$ on first Lipschitz homotopy groups
\[
\varphi_\#:\pilip{1}(M,\dcc{M})\longrightarrow\pilip{1}(M',\dcc{M'})
\]
is injective.
\end{theorem}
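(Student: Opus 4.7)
Let $\alpha\in\lipmap_{x_0}(\S^1,M)$ represent a class in $\ker\varphi_\#$, so that $\varphi\circ\alpha$ admits a based Lipschitz null-homotopy in $M'$. Collapsing $\{1\}\times\S^1\cup I\times\{s_0\}$ in the cylinder $I\times\S^1$ to a point recasts this null-homotopy as a Lipschitz map $H_0:\D^2\longrightarrow M'$ with $H_0|_{\partial\D^2}=\varphi\circ\alpha$ (the quotient is a compact quasi-convex Lipschitz space biLipschitz equivalent to $\D^2$). Conversely, any Lipschitz $\widetilde{H}:\D^2\longrightarrow M$ satisfying $\widetilde{H}|_{\partial\D^2}=\alpha$ unfurls, via a basepoint-trace argument and the path-connectedness of $M$ guaranteed by Chow-Rashevskii, to a based Lipschitz null-homotopy of $\alpha$. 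The plan is therefore to produce such a disk filling $\widetilde{H}$.

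Because $M'$ is purely 2-unrectifiable, Lemma~\ref{homotopy of a disk takes image on boundary} applied to $H_0$ yields a Lipschitz map $H_1:\D^2\longrightarrow M'$, homotopic to $H_0$ relative to $\partial\D^2$, whose image satisfies
\[
H_1(\D^2)\subset H_0(\partial\D^2)=\varphi\bigl(\alpha(\S^1)\bigr)\subset\varphi(M).
\]
Since $\varphi$ is an embedding, the set-theoretic composition $\widetilde{H}:=\varphi^{-1}\circ H_1:\D^2\longrightarrow M$ is well defined and restricts to $\alpha$ on $\partial\D^2$. The remaining and essential task is to verify that $\widetilde{H}$ is Lipschitz with respect to the Euclidean metric on $\D^2$ and the \cc metric $\dcc{M}$.

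The image $H_1(\D^2)$ is a compact subset of the open set $\varphi(M)\subset M'$, and by Lemma~\ref{contactoembeddings are locally bilipschitz} the inverse $\varphi^{-1}$ is locally Lipschitz on $\varphi(M)$. I would cover $H_1(\D^2)$ by finitely many open subsets $W_1,\dots,W_k\subset\varphi(M)$ on each of which $\varphi^{-1}$ is $C_i$-Lipschitz and then apply the Lebesgue number lemma to the open cover $\{H_1^{-1}(W_i)\}$ of the compact $\D^2$ to obtain $\delta>0$ such that every Euclidean ball of radius $\delta$ in $\D^2$ lies inside some $H_1^{-1}(W_i)$. With $L$ the Lipschitz constant of $H_1$ and $C=\max_i C_i$, this gives the small-scale estimate $\dcc{M}(\widetilde{H}(x),\widetilde{H}(y))\leq CL\|x-y\|$ whenever $\|x-y\|<\delta$. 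Because $\D^2$ is a geodesic length space, subdividing the Euclidean segment between arbitrary points $x,y\in\D^2$ into sub-segments of length less than $\delta$ and summing the local bounds promotes this into the global Lipschitz inequality $\dcc{M}(\widetilde{H}(x),\widetilde{H}(y))\leq CL\|x-y\|$. I expect this bridging step --- converting the local biLipschitz character of $\varphi$ into a uniform global Lipschitz constant for $\widetilde{H}$ --- to be the principal technical obstacle of the argument, and it rests essentially on the compactness of $H_1(\D^2)$ together with the length-space structure of $\D^2$.
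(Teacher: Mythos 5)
Your proposal is correct and follows essentially the same route as the paper: represent a kernel class by a Lipschitz disk filling $H_0$ of $\varphi\circ\alpha$ in $M'$, apply Lemma~\ref{homotopy of a disk takes image on boundary} to replace it by $H_1$ with image in $\Ima(\varphi\circ\alpha)\subset\Ima(\varphi)$, and pull back by $\varphi^{-1}$ to get a Lipschitz filling of $\alpha$ in $M$. The ``principal technical obstacle'' you flag is handled in the paper simply by noting that $\varphi^{-1}$ is a distributional map on the open image, hence locally Lipschitz by Lemma~\ref{contactoembeddings are locally Lipscitz}, so that its composition with the Lipschitz map $H_1$ on the compact, quasi-convex disk is Lipschitz --- which is exactly the finite-cover/Lebesgue-number argument you spell out.
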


%If $(M,\xi,g)$ was allowed to not be purely 2-unrectifiable, this theorem as stated would still hold. But, no open distributional embeddings would exist if it was not purely 2-unrectifiable.

\begin{proof}
As $\varphi_\#$ is a homomorphism, we can show that the map is injective by showing that the kernel of the map is trivial. 

Let $\alpha:\S^1\rightarrow(M,\dcc{M})$ be a Lipschitz map that represents an element of the kernel of $\varphi_\#$. So, there exists a Lipschitz map $H:\D^2\rightarrow(M',\dcc{M'})$ such that $H$ restricted to the boundary is the Lipschitz map $\varphi\circ\alpha$:
\[
H|_{\partial\D^2}=\varphi\circ\alpha.
\]
Since $\varphi\circ\alpha:\S^1\rightarrow(M',\dcc{M'})$ is the composition of Lipschitz functions, $\varphi\circ\alpha$ is Lipschitz. 
%Removed from above: Lemma~\ref{horizontal implies cc locally Lipschitz}.

By Lemma~\ref{homotopy of a disk takes image on boundary}, the Lipschitz homotopy $H$ can be taken such that the image of $H$ is contained in the image of the Lipschitz map $\varphi\circ\alpha$. Thus, $H$ takes image entirely in the image of $\varphi$:
\[
\Ima(H)\subset\Ima(\varphi\circ\alpha)\subset\Ima(\varphi).
\]
Since the inverse $\varphi^{-1}:\Ima{\varphi}\rightarrow (M,\xi)$ is a distributional diffeomorphism, the map given by composition
\[
\varphi^{-1}\circ H:\D^2\longrightarrow (M,\dcc{M})
\]
is Lipschitz and, when the map is restricted to the boundary of $\D^2$ equals the map $\alpha$. Thus, $\alpha$ is Lipschitz null homotopic. Therefore, the only element in the kernel of $\varphi_\#$ is the trivial homotopy class.
%Removed from above: (Lemma~\ref{horizontal implies cc locally Lipschitz})

\end{proof}

\begin{proof}[Proof of Theorem~\ref{lip_htpy_grps_contact_3_mflds} (3)] %-\\
Since contact 3-manifolds are purely 2-unrectifiable (Theorem~\ref{contact 3-mflds purely unrectifiable}), the result follows immediately from Theorem~\ref{inclusion yields injective map of homotopy groups}.
\end{proof}

\begin{proof}[Proof of Theorem~\ref{lip_htpy_grps_contact_3_mflds} (1)]
By the Theorem of Darboux (Theorem~\ref{darboux}), there is an open distributional embedding of $\H^1$ into $(M,\xi)$. By Theorem~\ref{lip_htpy_grps_contact_3_mflds} (3), the embedding induces an injective map between the associated Lipschitz homotopy groups. Thus, as the group $\pilip{1}(\H^1)$ is uncountably generated (Theorem 4.11 (2) in \cite{Dej}), the group $\pilip{1}(M,\dcc{M})$ is also uncountably generated.
\end{proof}

%%David: Prior to this remark, it would be appropriate to also prove the following result.
%Let \phi_0 : M_0 —> M and \phi_1 : M_1 —> M be distributional open embeddings.
%Suppose both M_0 and M_1 are connected.
%The images of (\phi_0)_# and (\phi_1)_# are equal if and only if the images of \phi_0 and \phi_1 are equal.  
%
%I imagine proving this by constructing a Legendrian knot in the image of one of these embeddings that is not contained in the image of the other.  
%Then arguing (based on your previous work) that this Legendrian knot represents an element in \pi_1^L of one of the embeddings, and that it does not factor through \pi_1^L of the other.  

\begin{remark}
Theorem~\ref{inclusion yields injective map of homotopy groups} indicates that the cardinality of $\pilip{1}(M,\dcc{M})$ is extremely large for any purely 2-unrectifiable sub-Riemannian manifold $(M,\xi,g)$. For a base point in $M$, any connected, open neighborhood $(U,\xi|_U)$ is a purely 2-unrectifiable sub-Riemannian manifold that openly and distributionally embeds into $(M,\xi)$. Thus, a copy of the set $\pilip{1}(U,\dcc{U})$ is a subgroup of $\pilip{1}(M,\dcc{M})$. Additionally, from Theorem~\ref{lip_htpy_grps_contact_3_mflds} (1), if $(M,\xi)$ is a contact 3-manifold, the  subgroup $\pilip{1}(U,\dcc{U})$ in $\pilip{1}(M,\dcc{M})$ is of uncountable cardinality. 
% Further, we suspect that each distinct connected neighborhood of a base point yields a distinct (i.e., not equal) uncountable subgroup of $\pilip{1}(M,\dcc{M})$.
\end{remark}
}

\bibliography{bib}{}
\bibliographystyle{plain}

\end{document}